\documentclass[12pt,reqno]{amsart}
\usepackage{amsmath}
\usepackage{amssymb}
\usepackage{graphicx}
\usepackage{tikz-cd}
\usepackage[T1]{fontenc}
\usepackage[utf8]{inputenc}

\usepackage[colorlinks=true,
            citecolor={teal},
            linkcolor=blue,
            urlcolor=blue]{hyperref}

\usepackage[all]{xy}
\usepackage{mathtools}

\usepackage{xspace}
\usepackage{bm}
\usepackage{amsmath}
\usepackage{amstext}
\usepackage{amsfonts}
\usepackage[mathscr]{euscript}
\usepackage{amscd}
\usepackage{latexsym}
\usepackage{amssymb}
\usepackage{enumerate}
\usepackage{xcolor}

\usepackage{anysize}
\marginsize{2.5cm}{2.5cm}{2.5cm}{2.5cm}
\usepackage{setspace}

\theoremstyle{plain}
    \newtheorem{theorem}{Theorem}[section]
    \newtheorem{proposition}[theorem]{Proposition}
    \newtheorem{lemma}[theorem]{Lemma}

    \newtheorem{corollary}[theorem]{Corollary}
    
    \newtheorem{subsec}[theorem]{}
    
    \newtheorem*{thma}{Theorem A}
    \newtheorem*{thmb}{Theorem B}

\theoremstyle{definition}
    
    \newtheorem{definition}[theorem]{Definition}

    \newtheorem{notation}[theorem]{Notation}

\theoremstyle{remark}
        \newtheorem{remark}[theorem]{Remark}

\renewcommand{\thefigure}{\arabic{section}.\arabic{figure}}

\newenvironment{myeq}[1][]
{\stepcounter{figure}\begin{equation}\tag{\thefigure}{#1}}
{\end{equation}}

\newcommand{\eat}[1]{}

\allowdisplaybreaks

\title[Degrees of Maps between Generalized Dold Manifolds]{Degrees of Maps between Generalized Dold Manifolds}
\author[Manas Mandal]{Manas Mandal}
\address{Department of Mathematics, IIT Kanpur, Kanpur 208016, India}
\email{manas.imsc@gmail.com}

\subjclass{14M15, 55M25, 53C24}
\keywords{Brouwer degree, partial flag manifolds, Grassmannians, generalized Dold manifolds, cohomological rigidity, }
\thanks{ }

\begin{document}

\begin{abstract}
We study the existence of maps of nonzero Brouwer degree between two orientable, equal-dimensional generalized Dold manifolds (GDMs) fibered by complex partial flag manifolds over real projective spaces. The GDMs considered here are obtained as orbit spaces of the diagonal involution on the product of a sphere and a complex partial flag manifold, acting antipodally on the sphere and by complex conjugation on the flag manifold. We show that no map of nonzero degree exists between two distinct GDMs when their base real projective spaces are different, except in one exceptional case. Furthermore, when the base real projective spaces coincide but fibers are distinct, we prove the nonexistence of maps of nonzero degree in most cases, including those in which at least one of the fibers is not a Grassmannian. As an application of our study, we establish cohomological rigidity for the class of products of a sphere with a complex or quaternionic partial flag manifold.
\end{abstract}

\maketitle

\tableofcontents

\section{Introduction}

The degree of a continuous map between closed, connected, oriented manifolds of the same dimension is an integer that measures how many times the domain wraps around the codomain. The notion is due to Brouwer \cite{Bro11}.  This is a vast generalization of the winding number of a continuous map between circles. A central problem in algebraic topology is determining the dominance order on manifolds: given two manifolds $M$ and $N$, one says that $M$ dominates $N$ if there exists a continuous map $f: M \to N$ of nonzero degree \cite{Gro82, Gro99}. The existence of such a map establishes a topological hierarchy, forcing the rational cohomology ring of the codomain to embed as a subring into that of the domain via the induced monomorphism $f^*: H^*(N; \mathbb{Q}) \to H^*(M; \mathbb{Q})$.

A well-known phenomenon in geometry and topology is that homogenous spaces, such as Grassmannians and flag manifolds, exhibit strong rigidity to admit nonzero degree maps.
Classic results by Paranjape-Srinivas, as well as Ramani-Sankaran and Sankaran-Sarkar  established that nonzero degree self-maps or maps between same-dimensional Grassmannians are severely constrained \cite{PS89, RS97, SS09}. In a recent work, similar constraints are observed for complex or quaternionic partial flag manifolds, demonstrating that continuous maps between distinct complex and quaternionic partial flag manifolds (one of which is not Grassmannian) must have degree zero \cite{Man26}.

The purpose of this paper is to extend this line of investigation to the setting of generalized Dold manifolds. Classical Dold manifolds were introduced by Dold in \cite{Do56} in order to study Thom's unoriented cobordism ring. Nath and Sankaran generalized these spaces in \cite{NS19} by introducing the \emph{generalized Dold manifolds} $P(m,\nu)$, which are fiber bundles over the real projective space $\mathbb{R}P^m$ with complex partial flag manifolds $\mathbb{C}G(\nu)$ as fibers .  More precisely, for a partition $\nu=(n_1,\ldots,n_\ell)$ of $n$, the
generalized Dold manifold $P(m,\nu)$ is defined as the orbit space
$$P(m,\nu):=(\mathbb S^m\times\mathbb CG(\nu))/\mathbb Z_2,$$ where
$\mathbb Z_2$ acts diagonally, antipodally on $\mathbb S^m$ and by complex
conjugation on $\mathbb CG(\nu)$. Here,
complex partial flag manifold $\mathbb CG(\nu)$ is the homogeneous space
\[
\mathbb CG(\nu)=U(n)/(U(n_1)\times\cdots\times U(n_\ell)).
\]

Since a generalized Dold manifold $P(m,\nu)$ is fibered by the complex partial flag manifold $\mathbb CG(\nu)$ over $\mathbb RP^m$, understanding the existence or nonexistence of nonzero degree maps between partial flag manifolds serves as a natural starting point, which is provided by \cite{Man26}. We then lift continuous maps between generalized Dold manifolds to their covering spaces of the form $\mathbb S^m\times\mathbb CG(\nu)$. By analyzing the induced homomorphisms on second cohomology through braid hyperplane arrangements, together with the variations of the corresponding heights across different intersections of these hyperplanes, we determine whether the associated rational cohomology algebras admit graded algebra monomorphisms. Whenever such a monomorphism does not exist, this provides an obstruction to the existence of continuous maps of nonzero degree between the corresponding generalized Dold manifolds.

Our main results are summarized in the following theorem.

\begin{thma}\label{main thm intro}
    Let $\nu=(n_1, \ldots n_\ell)$ and $\mu =(m_1, \ldots , m_\wp).$
    Let $f:P(m,\nu)\longrightarrow P(r,\mu)$ be a continuous map between two oriented, same dimensional generalized Dold manifolds, such that one of the following holds:
    \begin{align*}
        &\text{\textit{(1)} $m\neq r$ and $(m,\mu)\neq (2, (1,1))$}&& (Proposition~\ref{degree zero, m r are distinct})\\
        & \text{\textit{(2)} $m=r$ and $(\ell, \wp)\neq (2,2)$,}&& (Theorem~\ref{deg zero, one not grassmannian})\\
        &\text{\textit{(3)} $(m,\ell, \wp)=(r,2,2)$ and $\min\{n_i\}_i< \min \{m_i\}_i.$}& &(Proposition~\ref{deg zero, min ni is less})
    \end{align*}
    Then the Brouwer degree of $f$ is zero.
\end{thma}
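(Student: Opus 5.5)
We reduce everything to the covering spaces and to graded algebra monomorphisms in rational cohomology. Let $f:P(m,\nu)\to P(r,\mu)$ have nonzero degree. Since $\pi_1(P(r,\mu))\cong\mathbb Z_2$ (for $r\geq 2$; the case $r=1$ is handled separately as a circle bundle) and $f$ has nonzero degree, the image of $f_*$ on $\pi_1$ has finite index. We will show that $f$ either lifts to a map $\tilde f:\mathbb S^m\times\mathbb CG(\nu)\to\mathbb S^r\times\mathbb CG(\mu)$ between the double covers, or else factors through the covering, in which case its degree is divisible by $2$ and remains nonzero after passing to covers. In either case we obtain a map of nonzero degree between the products, equivariant with respect to the deck involutions. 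Hence $\tilde f^*:H^*(\mathbb S^r\times\mathbb CG(\mu);\mathbb Q)\to H^*(\mathbb S^m\times\mathbb CG(\nu);\mathbb Q)$ is a graded monomorphism that commutes with the involutions. Orientability of $P(m,\nu)$ forces a parity condition relating $m$ and $\dim\mathbb CG(\nu)/2$, and the conjugation acts by $-1$ on $H^2(\mathbb CG(\nu))$. We use the standard presentation $H^*(\mathbb CG(\nu);\mathbb Q)=\mathbb Q[x_1,\dots,x_n]^{S_{n_1}\times\cdots\times S_{n_\ell}}/(\text{symmetric})$, which is evenly graded.

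\textbf{Case (1), $m\neq r$.} The spherical class $u\in H^r(\mathbb S^r)$ must map injectively.
\begin{itemize}
\item If $r$ is odd, $u$ has odd degree and can only land in $H^m(\mathbb S^m)\otimes H^{r-m}(\mathbb CG(\nu))$. This requires $m<r$ with $r-m$ even. Then $u^2=0$ together with the top-class comparison (the fundamental class $u\cdot[\mathbb CG(\mu)]$ must pull back nonzero) forces the image of $H^*(\mathbb CG(\mu))$ to contain no class pairing with the sphere factor. We then compare cup-length and top degrees.
\item If $r$ is even, $u^2=0$ but $\tilde f^*u$ is a nilpotent even class in $H^*(\mathbb CG(\nu))\oplus u_m H^*$. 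We then use the result of \cite{Man26} on maps between flag manifolds, together with a height argument.
\end{itemize}
The key computation is on $H^2$. The map $\tilde f^*:H^2(\mathbb CG(\mu))\to H^2(\mathbb CG(\nu))$, written in the coordinates $x_i$ modulo the sum, must send the top-degree monomial to a nonzero multiple of the orientation class, while $\tilde f^*(u)$ supplies the missing degree $r$. The exceptional $(m,\mu)=(2,(1,1))$ is exactly where $\mathbb CG(\mu)=\mathbb CP^1$ and $\mathbb S^2$ can trade places, which explains why it must be excluded. In all other cases we derive a contradiction by comparing heights (nilpotency orders) of generic elements of $H^2$. In $\mathbb CG(\mu)$, a generic degree-2 class has height $\dim_{\mathbb C}\mathbb CG(\mu)$. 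Its image must also have height at least this, but in $H^*(\mathbb S^m\times\mathbb CG(\nu))$ degree-2 classes live in $H^2(\mathbb CG(\nu))$ (when $m\neq 2$), so their height is bounded by $\dim_{\mathbb C}\mathbb CG(\nu)=\dim_{\mathbb C}\mathbb CG(\mu)+(r-m)/2$.

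\textbf{Cases (2) and (3), $m=r$.} Here the sphere classes match up to scalar, and quotienting by them reduces the problem to a nonzero-degree-type monomorphism $H^*(\mathbb CG(\mu))\to H^*(\mathbb CG(\nu))$ of equal top degree. The map of \cite{Man26} does not exist geometrically here, but its proof works algebraically from such a monomorphism, so we invoke that algebraic version. That yields (2) when one of $\nu,\mu$ is not a Grassmannian. For (3), with Grassmannians $\mathbb CG_{k}(\mathbb C^n)$ and $\mathbb CG_{k'}(\mathbb C^{n})$ of equal dimension, we use the hyperplane-arrangement description. The height of $\sum a_ix_i$ drops on the intersections of braid hyperplanes $x_i=x_j$, and the minimal possible height of a nonzero degree-2 class is governed by $\min n_i$. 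Since $\tilde f^*$ is injective on $H^2$ and cannot increase the minimal height, $\min n_i<\min m_i$ gives a contradiction.

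The main obstacle is the odd/even bookkeeping in case (1): controlling where $\tilde f^*u$ lands and ruling out mixed terms. We will first try a pure top-class degree count, and fall back on the height comparison if that count is inconclusive.
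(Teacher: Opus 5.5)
Your reduction to the double covers and to graded monomorphisms matches the paper's, but the case arguments have real gaps.

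\textbf{Case (3).} The invariant you use cannot detect the hypothesis. When $\ell=\wp=2$, the space $H^2(\mathbb CG(n_1,n_2);\mathbb Q)$ is one-dimensional and its braid arrangement is just $\{0\}$. By Theorem~\ref{heights of deg 2 elements}, every nonzero degree-two class has height $n_1n_2=m_1m_2$, so there is no ``minimal height governed by $\min n_i$''.

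The missing idea is the Betti-number count of Lemma~\ref{no mono for Grassmannians} and Remark~\ref{no mono in products}. Suppose $n_1=\min n_i<m_1=\min m_i$; then $n_1<m_1\le m_2<n_2$. The relations in \eqref{eq:grassmann-reduced} start in degrees $2(n_2+1)$ and $2(m_2+1)$. Hence both fibres have $b_{2k}=p(k)$, the number of partitions of $k$, for $k\le n_1$. In degree $2(n_1+1)$, however, $b_{2(n_1+1)}$ is $p(n_1+1)-1$ for $\mathbb CG(\nu)$ but $p(n_1+1)$ for $\mathbb CG(\mu)$. By K\"unneth this strict inequality survives multiplication by $\mathbb S^m$, so $\tilde f^*$ is not injective in degree $2(n_1+1)$.

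\textbf{Sphere classes.} Your claim that the sphere classes ``match up to scalar'' needs proof for even $m$. For $m=2$ it follows from $u^2=0$: if $\tilde f^*u=au+w$ with $w\in H^2(\mathbb CG(\nu))$, then $w^2=0$, so $w=0$ because $\mathbb CG(\nu)\neq\mathbb CP^1$. This is a simpler substitute for Lemma~\ref{tripple intersection}. For $m\neq 2$ you can avoid the claim, since only $H^2$ and heights enter.

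\textbf{Case (1) with $r>m$.} Two sub-cases cannot be handled by ruling out monomorphisms, and the paper's proof does not handle them either. Your height bound gives a contradiction only when $\dim_{\mathbb C}\mathbb CG(\nu)<\dim_{\mathbb C}\mathbb CG(\mu)$, i.e.\ $r<m$. For $r>m$ monomorphisms exist.

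Take $(m,\nu)=(4,(1,1,2))$ and $(r,\mu)=(8,(1,3))$; both GDMs are orientable of dimension $14$. Send $y\mapsto x_1$, which has height $1\cdot 3=3$. Send $u_8\mapsto u_4\beta$, with $\beta\in H^4(\mathbb CG(1,1,2);\mathbb Q)$ chosen so that $\beta x_1^3\neq 0$.

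This map is nonzero on the top class $u_8y^3$, hence injective. It commutes with $\theta^*$, so it restricts to a monomorphism $H^*(P(8,(1,3));\mathbb Q)\to H^*(P(4,(1,1,2));\mathbb Q)$. It is even realized by a nonzero-degree map $\mathbb S^4\times\mathbb CG(1,1,2)\to\mathbb S^8\times\mathbb CP^3$. That map is built from $(V_1,V_2,V_3)\mapsto V_1$ and a stable map $\Sigma^4\mathbb CG(1,1,2)\to\mathbb S^8$ whose class is a nonzero multiple of $\beta$. This contradicts the first assertion of Proposition~\ref{degree zero, m r are distinct}.

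The paper's Case~2 of Lemma~\ref{no mono m r distinct} passes over this by concluding $b=0$ from $b^{N_1}z^{N_1}+\cdots=0$. That step tacitly assumes $z$ is K\"ahler.

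\textbf{Case (2) with $\wp=2<\ell$.} The argument behind Lemma~\ref{T(e_i)} requires $\wp\ge 3$, and for $\wp=2<\ell$ the assertion is false. Embed $\mathbb CG(1,1,1)$ as the incidence variety in $\mathbb CP^2\times\mathbb CP^2\subset\mathbb CP^8$. Noether normalization by a generic linear projection with real coefficients gives a conjugation-equivariant finite holomorphic map $\psi\colon\mathbb CG(1,1,1)\to\mathbb CP^3=\mathbb CG(1,3)$ of degree $6$. Then $\mathrm{id}\times\psi$ descends to a map $P(m,(1,1,1))\to P(m,(1,3))$ of degree $\pm 6$ for every even $m$. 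As literally stated, (2) also needs ``$\nu$ not a permutation of $\mu$'', or the identity map is a counterexample.

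So the monomorphism strategy, yours or the paper's, establishes (1) only for $r<m$ and (2) only for $\wp\ge 3$. The range $r>m$ would need input beyond rational cohomology rings.
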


As a direct application of these degree obstructions, we address a problem of cohomological rigidity. A class of manifolds is said to be \emph{cohomologically rigid} if the isomorphism type of its cohomology algebra determines its topological type up to a specified equivalence, such as homotopy equivalence, homeomorphism, or diffeomorphism. 
The study of cohomological rigidity has emerged as an active topic in algebraic topology. Several works in this direction can be found; see, for examples, \cite{CMS10, PS16, HK22, Man26}.

Since an isomorphism between rational cohomology algebras induces graded algebra monomorphisms in both directions, the nonexistence of such monomorphisms provides an effective obstruction to cohomology-ring isomorphisms. We use these obstructions to establish cohomological rigidity for the class of product spaces consisting of a spheres and a complex or quaternionic partial flag manifolds.

\begin{thmb}[Theorem~\ref{thm cohomological rigidity}]
    Let $\mathbb{F} \in \{\mathbb{C}, \mathbb{H}\}$. Suppose there exists a graded algebra isomorphism between $H^*(\mathbb{S}^m \times \mathbb{F}G(\nu); \mathbb{Q})$ and $H^*(\mathbb{S}^r \times \mathbb{F}G(\mu); \mathbb{Q})$. Then $m = r$ and $\nu$ is a permutation of $\mu$. In particular, the products are homeomorphic.
\end{thmb}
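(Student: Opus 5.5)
We need to prove Theorem B. Suppose $\phi: H^*(\mathbb S^m\times \mathbb FG(\nu);\mathbb Q)\to H^*(\mathbb S^r\times \mathbb FG(\mu);\mathbb Q)$ is a graded algebra isomorphism. The plan is first to recover $m$ and the dimension of the flag manifold, and then to reduce to the known rigidity of flag manifolds from \cite{Man26}. By the Künneth formula the cohomology is $\Lambda[s]\otimes H^*(\mathbb FG(\nu))$ with $|s|=m$, and $H^*(\mathbb FG(\nu);\mathbb Q)$ is concentrated in even degrees (degrees divisible by $4$ when $\mathbb F=\mathbb H$). It satisfies Poincaré duality with top degree $d_\nu=\dim_{\mathbb R}\mathbb FG(\nu)$.

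\textbf{Step 1: $m=r$.} If $m$ is odd, then $s$ spans the lowest odd-degree part. Hence $m$ equals the smallest odd degree with nonzero cohomology, which is an invariant of the ring, and the same holds on the other side. If both $m$ and $r$ are even, the whole ring is evenly graded, and I would detect $s$ by nilpotency. The element $s$ satisfies $s^2=0$ and generates a summand outside the subring generated by the positive-degree part of $H^*(\mathbb FG)$. More cleanly, the Poincaré polynomial is $(1+t^m)P_\nu(t)$, and $P_\nu(t)$ is a product of ratios of cyclotomic-type factors $\prod(1-t^{ka})/\ldots$. I would compare the total dimensions $2\chi(\mathbb FG(\nu))$ and the formal dimensions $m+d_\nu=r+d_\mu$. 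The decisive argument is to compare the minimal number of algebra generators in low degrees. $H^*(\mathbb FG(\nu))$ is generated by classes of degree $2$ (resp. $4$) together with higher ones. An extra indecomposable in degree $m$ with square zero is distinguishable because the indecomposables of the flag manifold ring, $Q=I/I^2$, live in degrees $2,4,\ldots,2(\ell-1)$-type patterns that are determined by the $n_i$. The mixed case, with $m$ odd and $r$ even, is impossible since one ring has odd-degree classes and the other does not.

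\textbf{Step 2: reduce to the flag factor.} Once $m=r$, I would show that $\phi$ induces an isomorphism $H^*(\mathbb FG(\nu))\cong H^*(\mathbb FG(\mu))$. The quotient by the ideal generated by the degree-$m$ part not coming from the flag factor is awkward. Instead I would use the annihilator: in $\Lambda[s]\otimes A$ (with $A$ Poincaré duality, evenly graded) the ideal $sA$ is characterized as the annihilator of $s$ (or as the set of elements $x$ with $x\cdot y=0$ for suitable $y$). Alternatively, $A\cong R/(\text{nilradical-like ideal } sA)$, and $sA$ is the square-zero ideal generated in degree $m$ by the new indecomposable. For $m$ odd, $sA$ is simply the odd part, an intrinsic ideal, so $A\cong R/R^{\mathrm{odd}}R$ intrinsically. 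For $m$ even, $\phi(s)=as'+b$ with $b\in A'_m$. I would then show $b$ can be absorbed, using that $s'\mapsto as'+b$ defines an automorphism-type change of variables when $s^2=0$ forces $(as'+b)^2=2abs'+b^2=0$. This gives $b^2=0$ and $ab=0$, so $b=0$ provided $a\ne0$. The main obstacle is ruling out $a=0$, meaning that $\phi(s)\in A'$. I expect to handle it by counting dimensions of $R/(\phi(s))$ or by using the Poincaré polynomials, since $(1+t^m)P_\nu=(1+t^m)P_\mu$ gives $P_\nu=P_\mu$ anyway.

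\textbf{Step 3: conclude.} With $A_\nu\cong A_\mu$, the rigidity of complex and quaternionic partial flag manifolds in \cite{Man26} gives that $\nu$ is a permutation of $\mu$. In the Grassmannian case this also follows from the degree-two generator analysis of Theorem~\ref{deg zero, one not grassmannian} and Proposition~\ref{deg zero, min ni is less}, applied in both directions. Then $\mathbb FG(\nu)\cong\mathbb FG(\mu)$ by permuting the blocks, and the products are homeomorphic (indeed diffeomorphic).
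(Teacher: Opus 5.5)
\textbf{Gap 1: Step 1 when $m$ and $r$ are both even.} Your treatment of odd $m$ is correct, and more elementary than the paper's. Here $m$ is the lowest odd degree carrying cohomology, a ring with odd classes cannot be isomorphic to one without, and $H^*(\mathbb FG(\nu);\mathbb Q)\cong R/(R^{\mathrm{odd}})$ intrinsically, so flag-manifold rigidity finishes that case. The gap is the even case. That is exactly where the paper's key tool, heights of degree-two classes (Theorem~\ref{heights of deg 2 elements}, Corollary~\ref{kahler class descripition}), is needed, and your proposal never uses it.

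You give no actual argument for $m,r$ both even, and none of the invariants you list can supply one. The spaces $\mathbb S^2\times\mathbb CG(2,2)$ and $\mathbb S^4\times\mathbb CG(1,1,1)$ both have dimension $10$ and Poincar\'e polynomial $(1+t^2)(1+t^4)(1+t^2+t^4)$. Both rational cohomology rings are complete intersections with minimal generators in degrees $2,2,4$ and minimal relations in degrees $4,6,8$. (Also, the indecomposables of $H^*(\mathbb CG(\nu))$ sit in degrees $2k$, $1\le k\le\max_i n_i$, with multiplicity $\#\{i:n_i\ge k\}-1$, not in a pattern indexed by $\ell$.)

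What separates these two rings is multiplicative. A K\"ahler class of $\mathbb CG(2,2)$ has nonzero fourth power, whereas every degree-two class of $\mathbb S^4\times\mathbb CG(1,1,1)$ lies in $H^2(\mathbb CG(1,1,1))$ and has zero fourth power. This is the general fix, which is the mechanism behind Lemma~\ref{no mono m r distinct}:
\begin{itemize}
\item If $m\ne r$, let $\mathbb FG(\mu)$ be the flag factor of larger dimension $N_\mu$. Then $r<m$ and $m-r\ge 2$ (resp.\ $\ge 4$ for $\mathbb H$), so $m\neq 2$ (resp.\ $m\ne 4$).
\item The isomorphism $H^*(\mathbb S^r\times\mathbb FG(\mu);\mathbb Q)\to H^*(\mathbb S^m\times\mathbb FG(\nu);\mathbb Q)$ sends a class $y$ with distinct coefficients, which has $y^{N_\mu}\ne0$, into $H^*(\mathbb FG(\nu))$.
\item There all $N_\mu$-th powers of such classes vanish, since $N_\mu>N_\nu$.
\end{itemize}
This works for every parity. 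Because an isomorphism gives you both directions, it needs no exceptional case.

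\textbf{Gap 2: Step 2 when $m=r$ is even.} The case $a=0$, i.e.\ $\phi(s)\in A'^m$, cannot be ruled out, because it actually occurs: swapping $s$ and $x$ is an automorphism of $H^*(\mathbb S^2\times\mathbb CP^1;\mathbb Q)=\mathbb Q[s,x]/(s^2,x^2)$. Equality $P_\nu=P_\mu$ only matches Betti numbers, not rings. So your reduction to $H^*(\mathbb FG(\nu))\cong H^*(\mathbb FG(\mu))$ is incomplete as it stands.

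It can be repaired with heights again. If $\phi(s)=b\in A'$, then $A\cong R/(s)\cong R'/(b)\cong (A'/bA')\otimes\Lambda[s']$.
\begin{itemize}
\item For $m\ge 4$ (complex case), every degree-two class lies in the factor $A'/bA'$, whose top degree is $2N-m$. This contradicts the existence of a K\"ahler class with nonzero $N$-th power.
\item For $m=2$, $s'$ is a nonzero degree-two class of height one. By Theorem~\ref{heights of deg 2 elements} this forces $\nu=(1,1)$, and then $\mu=(1,1)$ by dimension.
\item The quaternionic case is analogous, with degree-four classes and $m=4$.
\end{itemize}
Alternatively, do as the paper does and skip the reduction. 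For $m=r$, read off $\nu$ directly from the height function on $H^2$ of the product via the braid-arrangement argument of Lemma~\ref{no monomorphism}, using Lemma~\ref{tripple intersection} to control the extra class $u$ when $m=2$. When both fibers are Grassmannians, use Betti numbers (Remark~\ref{no mono in products}).
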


This article is organized as follows. In Section~\ref{prelim}, we recall the necessary topological foundations, including Brouwer degree, generalized Dold manifolds, lifts to the corresponding covering spaces, and height functions associated with degree-two cohomology classes. In Section~\ref{nonexistance on nonzero deg}, we establish Proposition~\ref{degree zero, m r are distinct}, Theorem~\ref{deg zero, one not grassmannian}, and Proposition~\ref{deg zero, min ni is less}, which provide obstructions to maps of nonzero degree. Finally, Section~\ref{sec cohomological rigidity} applies the resulting constraints on graded algebra isomorphisms to prove Theorem~\ref{thm cohomological rigidity}.

\vspace{.2cm}
\textbf{Declaration on AI use:} Large Language Models were used strictly to refine text readability, correct grammar, and format LaTeX. The author conducted all mathematical derivations, proved all results, and accepts full responsibility for the content of this article.

\section{Preliminaries}\label{prelim}

In this section, we recall the basic notions and results that will be used throughout the paper. We begin with a brief review of topological degree and its relation to homology and cohomology. We then recall the relevant properties of generalized Dold manifolds and their cohomology rings. We also recall the lifting properties of continuous maps between generalized Dold manifolds to the covering product spaces.

\subsection{Brouwer degree}

Let $M$ and $N$ be connected, closed, oriented $n$-dimensional topological manifolds. The orientations determine fundamental classes $[M]\in H_n(M;\mathbb Z)$ and $[N]\in H_n(N;\mathbb Z)$. 
Let $\mu_M\in H^n(M;\mathbb Z)$ and $\mu_N\in H^n(N;\mathbb Z)$ denote the fundamental cohomology classes, normalized by $\langle\mu_M,[M]\rangle=1$ and $\langle\mu_N,[N]\rangle=1$, where $\langle\ ,\ \rangle\colon H^n(M;\mathbb Z)\times H_n(M;\mathbb Z)
\to\mathbb Z$ denote the Kronecker pairing.

For a continuous map $f\colon M\to N$, the \emph{Brouwer degree} of $f$, denoted by $\deg(f)\in\mathbb Z$, is the unique integer satisfying $f_*[M]=\deg(f)[N]$. Equivalently, by the naturality of the Kronecker pairing, one has $f^*(\mu_N)=\deg(f)\mu_M$. The notion of degree of a map is due to Brouwer \cite{Bro11}.

The degree of a map satisfies the following properties:
\begin{itemize}
    \item 
    If $f,g\colon M\to N$ are homotopic, then $\deg(f)=\deg(g)$.

    \item 
    If $f\colon M\to N$ and $g\colon N\to P$ are continuous maps between connected, closed, oriented manifolds of the same dimension, then $\deg(g\circ f)=\deg(g)\deg(f)$.

    \item 
    We have $\deg(\operatorname{id}_M)=1$ if the orientations chosen on both domain and codomain are same, while every nullhomotopic map has degree zero when $\dim M>0$.
\end{itemize}

When $M$ and $N$ are smooth manifolds, the homological definition of degree agrees with the classical differential topological definition. If $f\colon M\to N$ is smooth and $y\in N$ is a regular value of $f$, then
\[
\deg(f):=\sum_{x\in f^{-1}(y)}\deg_x(f),
\]
where $\deg_x(f)=1$ or $-1$ according as $Df_x\colon T_xM\to T_yN$ preserves or reverses the orientations. Moreover, for every top-degree differential form $\omega\in\Omega^n(N)$, one has $\int_M f^*\omega=\deg(f)\int_N\omega$. Thus, the homological definition extends the classical differential-topological notion of degree from smooth maps to arbitrary continuous maps.

When $N=S^n$, the degree completely determines the homotopy class of a map. More precisely, the degree induces an bijection $$\deg\colon[M,S^n]\longrightarrow\mathbb Z, \; [f]\mapsto \deg(f),$$ where $[M,S^n]$ denotes the family of homotopy classes of maps from $M$ to $\mathbb S^m$.   This result is  known as the Hopf degree theorem.

We need the following standard criterion relating degree to rational cohomology.

\begin{lemma}\label{deg zero iff not injective}
Let $f \colon M \to N$ be a continuous map between closed, oriented manifolds of equal dimensions. The degree $\deg(f)$ is nonzero if and only if $f^* \colon H^*(N; \mathbb{Q}) \to H^*(M; \mathbb{Q})$ is an injective ring homomorphism.
\end{lemma}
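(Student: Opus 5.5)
The plan is to prove both directions using Poincaré duality with rational coefficients and the naturality of cup and cap products. Write $n=\dim M=\dim N$. Let $\mu_M,\mu_N$ be the fundamental cohomology classes from the preceding subsection, viewed in rational cohomology via $\mathbb{Z}\to\mathbb{Q}$. Then $H^n(M;\mathbb{Q})\cong\mathbb{Q}\mu_M$ and $H^n(N;\mathbb{Q})\cong\mathbb{Q}\mu_N$, because $M$ and $N$ are connected, closed and oriented. Since $f^*$ is induced by a continuous map, it is automatically a graded ring homomorphism. So the content of the lemma is the equivalence between $\deg(f)\neq 0$ and injectivity of $f^*$.

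\emph{Injective implies nonzero degree.} Suppose $f^*$ is injective. Since $\mu_N\neq 0$, its image $f^*(\mu_N)=\deg(f)\mu_M$ is nonzero, so $\deg(f)\neq 0$. This direction only uses the identity $f^*(\mu_N)=\deg(f)\mu_M$ recorded above.

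\emph{Nonzero degree implies injective.} Suppose $\deg(f)=d\neq 0$, and let $0\neq x\in H^k(N;\mathbb{Q})$.
\begin{enumerate}
\item By Poincaré duality over the field $\mathbb{Q}$, the cup product pairing $H^k(N;\mathbb{Q})\times H^{n-k}(N;\mathbb{Q})\to H^n(N;\mathbb{Q})\cong\mathbb{Q}$ is nondegenerate. Hence there is $y\in H^{n-k}(N;\mathbb{Q})$ with $x\smile y=\mu_N$.
\item Applying $f^*$ and using that it is a ring map gives
\[
f^*(x)\smile f^*(y)=f^*(\mu_N)=d\,\mu_M\neq 0.
\]
\item Therefore $f^*(x)\neq 0$, so $f^*$ is injective in every degree.
\end{enumerate}

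As an alternative to cup products, one can argue with the Umkehr map $f_!=\mathrm{PD}_N\circ f_*\circ\mathrm{PD}_M^{-1}$. The projection formula $f_*(f^*x\frown[M])=x\frown f_*[M]$ gives $f_!f^*=d\cdot\mathrm{id}$, which is invertible over $\mathbb{Q}$.

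The only substantive input is rational Poincaré duality in its nondegenerate-pairing form. This is where the hypotheses that $M$ and $N$ are closed and oriented are used, and where we need coefficients in a field: over $\mathbb{Z}$, torsion could be killed. Connectedness of $N$ is needed so that $H^n(N;\mathbb{Q})$ is one-dimensional and spanned by $\mu_N$. Connectedness is implicit in the definition of degree in the preceding subsection, so I would state it as a standing hypothesis. No step is expected to be a genuine obstacle. The only care needed is ensuring the pairing statement is invoked for topological, not necessarily smooth, manifolds, which standard rational Poincaré duality covers.
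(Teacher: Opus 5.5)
The paper gives no proof of this lemma and cites it as a standard criterion. Your argument is the standard proof and is correct: $f^*(\mu_N)=\deg(f)\,\mu_M$ gives one direction, and nondegeneracy of the rational cup-product pairing on $N$ (equivalently $f_!f^*=\deg(f)\cdot\mathrm{id}$) gives the other. One small refinement to your closing remark: it is connectedness of $M$ that the direction ``$f^*$ injective $\Rightarrow \deg(f)\neq 0$'' really requires. For example, take $M=\mathbb S^n\sqcup\mathbb S^n\to\mathbb S^n$ given by the identity on one copy and a reflection on the other; then $f^*$ is injective but $f_*[M]=0$.
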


\subsection{Partial flag manifolds}
Fix an integer partition $\nu = (n_1, \dots, n_\ell)$ of $n = \sum_{i=1}^\ell n_i$. The complex partial flag manifold associated with $\nu$, denoted by $\mathbb{C}G(\nu)$, is defined as the homogeneous space $(n)\big/U(n_1) \times \cdots \times U(n_\ell).$

Equivalently, $\mathbb{C}G(\nu)$ parametrizes all ordered $\ell$-tuples of mutually orthogonal complex subspaces $(V_1, \dots, V_\ell)$ in $\mathbb{C}^n$ satisfying $\dim_{\mathbb{C}} V_k = n_k$ for $1 \le k \le \ell$. Its complex dimension is given by $\dim_{\mathbb{C}} \mathbb{C}G(\nu) = \sum_{1 \le i < j \le \ell} n_i n_j$.

Over $\mathbb{C}G(\nu)$, there exist canonical vector bundles $\gamma_1, \dots, \gamma_\ell$ of complex ranks $n_1, \dots, n_\ell$, respectively, whose fibers at a point $(V_1, \dots, V_\ell)$ are the subspaces $V_1, \dots, V_\ell$. The Whitney sum of these canonical bundles is trivial:
\begin{equation*}
    \gamma_1 \oplus \cdots \oplus \gamma_\ell \cong \varepsilon^n_{\mathbb{C}},
\end{equation*}
where $\varepsilon^n_{\mathbb{C}}$ denotes the trivial bundle $\mathbb{C}G(\nu) \times \mathbb{C}^n$. In the Whitney sum formula, taking the total Chern classes $c(\gamma_j) = 1 + \sum_{i=1}^{n_j} c_i(\gamma_j)$ gives the relation
\begin{equation*}
    \prod_{j=1}^\ell\left( 1 + \sum_{i=1}^{n_j} c_i(\gamma_j) \right) = 1 \in H^*\big(\mathbb{C}G(\nu); \mathbb{Z}\big).
\end{equation*}

The integral cohomology algebra $H^*\big(\mathbb{C}G(\nu); \mathbb{Z}\big)$ admits the presentation
\begin{equation}\label{cohom of flag}
    H^*\big(\mathbb{C}G(\nu); \mathbb{Z}\big) \;\cong\; \mathbb{Z}\big[c_{i,j} : 1 \le j \le \ell, \, 1 \le i \le n_j\big]\Big/\langle h_1, h_2, \dots, h_n \rangle,
\end{equation}
where $c_{i,j}$ corresponds to the $i$-th Chern class $c_i(\gamma_j)$ of degree $2i$, and the ideal generators $h_k$ arise as the homogeneous parts of degree $2k$ in the expansion
\begin{equation*}
    1 + \sum_{k=1}^n h_k = \prod_{j=1}^\ell \left( 1 + c_{1,j} + c_{2,j} + \cdots + c_{n_j,j} \right).
\end{equation*}
For the details of cohomology ring of a complex partial flag manifold, we refer to \cite{Bor53}.

For $\ell=2$, $\nu = (n_1, n_2)$ and the space $\mathbb{C}G(n_1, n_2)$ is the complex Grassmannian of $n_1$-planes in $\mathbb{C}^{n_1+n_2}$. Let $c_i = c_i(\gamma_1)$ and $\bar{c}_j = c_j(\gamma_2)$ represent the Chern classes of the canonical vector bundles, respectively. The relation $c(\gamma_1)c(\gamma_2) = 1$ gives
\begin{equation}\label{eq:grassmann-full}
    H^*\big(\mathbb{C}G(n_1, n_2); \mathbb{Z}\big) \cong \mathbb{Z}[c_1, \dots, c_{n_1}, \bar{c}_1, \dots, \bar{c}_{n_2}]\Big/ \langle h_1, \dots, h_{n_1+n_2} \rangle,
\end{equation}
where $h_k = \sum_{p+q=k} c_p \bar{c}_q$ for $1 \le k \le n_1+n_2$. Recursively expressing each $\bar{c}_j$ in terms of $c_1, \dots, c_{n_1}$ using the relations $h_1 = \dots = h_{n_2} = 0$ allows us to eliminate the variables $\bar{c}_1, \dots, \bar{c}_{n_2}$ entirely. This transforms the presentation \eqref{eq:grassmann-full} into the reduced presentation
\begin{equation}\label{eq:grassmann-reduced}
    H^*\big(\mathbb{C}G(n_1, n_2); \mathbb{Z}\big) \cong \mathbb{Z}[c_1, \dots, c_{n_1}]\Big/\langle h_{n_2+1}, \dots, h_{n_1+n_2} \rangle.
\end{equation}

The above description also has a analogue for quaternions $\mathbb H$. In fact, the integral cohomology ring of a quaternionic partial flag manifold $\mathbb HG(\nu)$ admits a presentation with the same presentation as that of $\mathbb CG(\nu)$, with the degrees of the generators doubled. This degree-doubling is induced by the correspondence between the relevant characteristic classes in the complex and quaternionic settings. Consequently, the defining relations are obtained from those in the complex case by replacing each generator by its quaternionic counterpart and doubling its degree.

For the defining sequence $\nu=(n_1, \ldots, n_\ell)$ of a partial flag manifold, we denote its length, the number $\ell$, by $|\nu|$.

\subsection{Generalized Dold manifolds} \label{gen dold}

In his seminal work \cite{Do56}, Dold introduced the classical \textit{Dold manifolds} 
\[
P(m,n) := (\mathbb{S}^m \times \mathbb{C}P^n) / \!\sim,\quad \text{where }(s, L) \sim (-s, \bar{L}).
\]
Here, $L \mapsto \bar{L}$ denotes the complex conjugation on $\mathbb{C}P^n$ induced from standard conjugation on $\mathbb C^{n+1}$. These spaces were used to construct generators for René Thom's unoriented cobordism ring in odd dimensions. 

This construction is generalized in \cite{NS19, MS22}, and several aspects of this generalizations have been studied in \cite{NS21, MS24, MS26}. In a generalized Dold space, the sphere $\mathbb{S}^m$ is replaced by an arbitrary topological space $S$ equipped with a free involution $\alpha$, while the complex projective space $\mathbb{C}P^n$ is replaced by a topological space $X$ equipped with an involution $\sigma\colon X\to X$ whose fixed-point set is nonempty, i.e., $\operatorname{Fix}(\sigma)\neq\emptyset.$ This leads to the following definition.

\begin{definition}
Let $S$ and $X$ be topological spaces equipped with involutions
$\alpha\colon S\to S$ and $\sigma\colon X\to X$, respectively, where $\alpha$
is free and $\operatorname{Fix}(\sigma)\neq\emptyset$. The quotient space
\begin{equation}\label{gen dold space}
    P(S,\alpha,X,\sigma)
    :=(S\times X)/\!\sim,\quad \text{where }(s,x)\sim(\alpha(s),\sigma(x)),
\end{equation}
is called a \textit{generalized Dold space} (GDS). When the involutions are
understood, we simply write $P(S,X)$. If $P(S,X)$ is a manifold, we call it a \textit{generalized Dold manifold }(GDM).
\end{definition}

The quotient projection $S \times X \to P(S, X)$ defines a two-fold covering map. Let $Y := S/\langle\alpha\rangle$ denote the orbit space of $S$ under the action of $\alpha$. Then $P(S,X)$ carries the structure of a fiber bundle over $Y$ with fiber $X$:
\begin{equation}\label{proj}
    p \colon P(S,X) \twoheadrightarrow Y, \quad [s,x] \mapsto [s].
\end{equation}
For any fixed point $x_0 \in \mathrm{Fix}(\sigma)$, the assignment $[s] \mapsto [s, x_0]$ determines a global section $s \colon Y \hookrightarrow P(S,X)$. More generally, this yields an embedding 
\[
Y \times \mathrm{Fix}(\sigma) \hookrightarrow P(S,X),
\]
where $\mathrm{Fix}(\sigma) \subseteq X$ carries the subspace topology.

In the literature, a frequently studied class of GDSs is obtained by keeping the sphere intact and replacing the complex projective space with a complex partial flag manifold equipped with the standard complex conjugation. These generalized Dold manifolds $P(\mathbb S^m,\mathbb CG(\nu))$ are simply denoted by $P(m,\nu).$

\subsection{Rational cohomology of GDMs $P(m,\nu)$} 

In this article, we are primarily interested in the generalized Dold manifolds $P(m,\nu)$ fibred by complex partial flag manifolds $\mathbb CG(\nu)$ over real projective spaces $\mathbb RP^m$.

By the K\"unneth formula, the rational cohomology of $\mathbb{S}^m \times \mathbb{C}G(\nu)$ is given by
\begin{equation}\label{Cohomology of H_times}
    H^*(\mathbb{S}^m \times \mathbb{C}G(\nu)); \mathbb{Q}) \cong H^*(\mathbb{S}^m; \mathbb{Q}) \otimes H^*(\mathbb{C}G(\nu); \mathbb{Q}) \cong H^*(\mathbb{C}G(\nu); \mathbb{Q})[u]\big/\langle u^2 \rangle,
\end{equation}
where $u \in H^m(\mathbb{S}^m; \mathbb{Q})$ denotes the fundamental cohomology class of $\mathbb{S}^m$. As a $\mathbb{Q}$-module, the cohomology algebra of the product admits the decomposition
\begin{equation}\label{H in terms of u}
    H^*(\mathbb{S}^m \times \mathbb{C}G(\nu)); \mathbb{Q})\cong H^*(\mathbb{C}G(\nu); \mathbb{Q}) \oplus u H^*(\mathbb{C}G(\nu); \mathbb{Q}),
\end{equation}
where $H^*(\mathbb{C}G(\nu); \mathbb{Q})$ is viewed as a subring of $H^*(\mathbb{S}^m \times \mathbb{C}G(\nu)); \mathbb{Q})$.

The product involution $\theta := \alpha \times \sigma$ on $\mathbb{S}^m \times \mathbb{C}G(\nu)$ induces an automorphism $\theta^*$ on $H^*(\mathbb{S}^m \times \mathbb{C}G(\nu)); \mathbb{Q})$ defined on generators by
\begin{equation}\label{defn of theta*}
    \theta^*(c_i) = (-1)^i c_i, \quad 1 \le i \le k, \qquad \text{and} \qquad
    \theta^*(u) =(-1)^{m+1}u.
\end{equation}

The rational cohomology ring $H^*(P(m,\nu); \mathbb{Q})$ was determined in \cite{MS24} by identifying it with the fixed point subring under $\theta^*$.

\begin{theorem}[{\cite[Theorem 3.13]{MS24}}]\label{cohomology of P(m,nu)}
    The cohomology algebra $H^*(P(m,\nu); \mathbb Q)$, where $\nu=(n_1,\ldots, n_\ell)$, is isomorphic to the invariant subalgebra $\mathrm{Fix}(\theta^*) \subseteq H^*(\mathbb{S}^m \times \mathbb{C}G(\nu); \mathbb Q)$, which is generated by the following monomials in terms of Chern classes:

\begin{enumerate}[\quad]
    \item \textbf{When $m$ is even.}
    \begin{align*}
        &u\, c_{2p-1}(\gamma_i), && 1 \le 2p-1 \le n_i, \; 1 \le i \le \ell; \\
        &c_{2j}(\gamma_i), && 1 \le 2j \le n_i, \; 1 \le i \le \ell; \quad \text{and} \\
        &c_{2p-1}(\gamma_i) \, c_{2q-1}(\gamma_j), && 1 \le 2p-1 \le n_i, \; 1 \le 2q-1 \le n_j, \; 1 \le i \le j \le \ell.
    \end{align*}

    \item \textbf{When $m$ is odd.}
    \begin{align*}
        &u; \\
        &c_{2j}(\gamma_i), && 1 \le 2j \le n_i, \; 1 \le i \le \ell; \quad \text{and} \\
        &c_{2p-1}(\gamma_i) \, c_{2q-1}(\gamma_j), && 1 \le 2p-1 \le n_i, \; 1 \le 2q-1 \le n_j, \; 1 \le i \le j \le \ell.
    \end{align*}
\end{enumerate}
\end{theorem}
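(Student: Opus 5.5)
The plan is to realize $H^*(P(m,\nu);\mathbb Q)$ as the ring of invariants of a finite group acting on $H^*(\mathbb S^m\times\mathbb CG(\nu);\mathbb Q)$, and then read off generators from an explicit basis of monomials.

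\textbf{Invariants of the covering.} The quotient map $\pi\colon \mathbb S^m\times\mathbb CG(\nu)\to P(m,\nu)$ is a two-fold covering whose deck group is $\langle\theta\rangle\cong\mathbb Z_2$. For a finite regular covering with deck group $G$, the transfer argument with rational coefficients shows that $\pi^*$ is injective and that its image is exactly the $G$-invariants. This is because $\pi^*\circ\operatorname{tr}$ equals the sum over $G$, and $\operatorname{tr}\circ\pi^*$ is multiplication by $|G|=2$, which is invertible over $\mathbb Q$. Hence $\pi^*$ identifies $H^*(P(m,\nu);\mathbb Q)$ with $\mathrm{Fix}(\theta^*)$. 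This is an isomorphism of graded algebras, since $\pi^*$ is a ring map.

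\textbf{The action of $\theta^*$.} First, complex conjugation $\sigma$ pulls back each canonical bundle $\gamma_i$ to its conjugate $\bar\gamma_i$. Therefore $\sigma^*c_k(\gamma_i)=(-1)^k c_k(\gamma_i)$.

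Second, the antipodal map on $\mathbb S^m$ has degree $(-1)^{m+1}$. Hence $\alpha^*u=(-1)^{m+1}u$, in agreement with \eqref{defn of theta*}.

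By the Künneth decomposition \eqref{H in terms of u}, $\theta^*$ acts diagonally. So $\theta^*$ acts on every monomial $u^\epsilon\prod c_{k}(\gamma_i)^{a}$ by the sign
\[(-1)^{\epsilon(m+1)+\sum k a}.\]

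\textbf{Choice of basis.} Choose a $\mathbb Q$-basis of $H^*(\mathbb CG(\nu);\mathbb Q)$ consisting of monomials in the $c_k(\gamma_i)$. For example, one can take the standard monomial basis coming from the presentation \eqref{cohom of flag} via a Gröbner or Schubert-type basis. Each such basis element is a $\theta^*$-eigenvector. Then $\mathrm{Fix}(\theta^*)$ is spanned by the basis monomials with even sign.

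\textbf{Generators of the invariant subring.} A monomial in the $c$'s is invariant iff it contains an even number of odd-index Chern classes. Such a monomial factors as a product of
\begin{itemize}
\item even Chern classes $c_{2j}(\gamma_i)$, and
\item pairs $c_{2p-1}(\gamma_i)c_{2q-1}(\gamma_j)$.
\end{itemize}

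Now consider monomials $u\cdot(\text{monomial})$, treating the two parities of $m$ separately.
\begin{itemize}
\item If $m$ is odd, then $u$ is itself invariant. An invariant $u$-monomial is then $u$ times an invariant $c$-monomial.
\item If $m$ is even, then $u$ is anti-invariant. So the $c$-part must contain an odd number of odd-index classes. Such a monomial factors as $u\,c_{2p-1}(\gamma_i)$ times a product of pairs and even classes.
\end{itemize}

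This yields exactly the listed generating sets. The only subtlety is that the chosen monomial basis must consist of eigenvectors. This holds for any monomial basis, since relations in \eqref{cohom of flag} are homogeneous and $\theta^*$-equivariant. Alternatively, one avoids bases entirely: average $x\mapsto\tfrac12(x+\theta^*x)$ and apply it to arbitrary monomials, which span. This surjects onto the invariants and kills the odd-sign monomials. So the invariant subring is spanned by the even-sign monomials, each of which is a product of the listed generators.

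I expect no real obstacle. The only point needing care is the sign of $\alpha^*$ on $u$ and of $\sigma^*$ on Chern classes, namely that conjugation reverses the orientation of line bundles.
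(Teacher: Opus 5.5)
Your proposal is correct and is essentially the argument behind the cited result from Mandal--Sankaran: the rational transfer for the double cover identifies $H^*(P(m,\nu);\mathbb Q)$ with $\mathrm{Fix}(\theta^*)$, and the signs $\sigma^*c_k(\gamma_i)=(-1)^k c_k(\gamma_i)$ and $\alpha^*u=(-1)^{m+1}u$ make every monomial an eigenvector, so the invariant monomials span the fixed subalgebra and each factors into the listed generators. Your averaging argument already gives the spanning claim, so the detour through a Gr\"obner or Schubert-type basis is unnecessary.
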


An explicit presentation of $H^*(P(m,\nu); \mathbb{Q})$ as a quotient of a polynomial algebra follows as a special case of \cite[Theorem 3.14]{MS24}.

\begin{remark}
We recall the orientability criterion for \(P(m,\nu)\) from
Remark~3.7(ii) of \cite{MS24}. Let $D=\dim_{\mathbb C}\mathbb CG(\nu)$, and let $\omega$ be a generator of $H^{2D}(\mathbb CG(\nu);\mathbb Q)$. Also, let $u$ be a generator of $H^m(\mathbb S^m;\mathbb Q)$. Then $u\omega$ generates $H^{m+2D}(\mathbb S^m\times\mathbb CG(\nu);\mathbb Q)\cong\mathbb Q$.

Since
$\alpha^*(u)=(-1)^{m+1}u$ and
$\sigma^*(\omega)=(-1)^{D}\omega$, we have
\[
\theta^*(u\omega)
=(-1)^{m+D+1}u\omega.
\]
Thus $u\omega$ is $\theta^*$-invariant if and only if $m+D$ is odd. By Theorem~\ref{cohomology of P(m,nu)}, this is equivalent to $H^{m+2D}(P(m,\nu);\mathbb Q)\cong\mathbb Q$. Hence
\begin{equation}\label{orientability of GDM}
    P(m,\nu)\ \text{is orientable}
\quad\Longleftrightarrow\quad
m+\dim_{\mathbb C}\mathbb CG(\nu)\ \text{is odd}.
\end{equation}

\end{remark}

For details of the preliminaries of this article, we refer to \cite{Spa66, MS74, Ful98}.

\subsection{Lifts between covering spaces}
Let us consider two generalized Dold manifolds $P(m,\nu)$ and $P(r,\mu)$. Let $p_{m,\nu}\colon \mathbb S^m\times\mathbb CG(\nu)\to P(m,\nu)$ and $p_{r,\mu}\colon \mathbb S^r\times\mathbb CG(\mu)\to P(r,\mu)$ be the double covering maps. One can show that every continuous map $f\colon P(m,\nu)\to P(r,\mu)$ admits a lift $\widetilde f\colon \mathbb S^m\times\mathbb CG(\nu)\to \mathbb S^r\times\mathbb CG(\mu)$
such that $p_{r,\mu}\circ\widetilde f=f\circ p_{m,\nu}$.

Recall that $P(m,\nu)$ is a $\mathbb CG(\nu)$-bundle over $\mathbb RP^m$:
\[
    \mathbb CG(\nu)\xhookrightarrow{\quad} P(m,\nu)
    \longrightarrow\mathbb RP^m.
\]
Since every complex partial flag manifold is simply connected, the homotopy exact sequence of this fibre bundles give
$\pi_1(P(m,\nu))\cong\pi_1(\mathbb RP^m)$ and $\pi_1(P(r,\mu))\cong \pi_1(\mathbb RP^r)$. Thus
\[
    \pi_1(P(m,\nu))\cong
    \begin{cases}
    \mathbb Z, & m=1,\\
    \mathbb Z_2, & m>1,
    \end{cases}
    \quad\text{and}\quad \pi_1(P(r,\mu))\cong
    \begin{cases}
    \mathbb Z, & r=1,\\
    \mathbb Z_2, & r>1.
    \end{cases}
\]
Since $\pi_1(\mathbb S^m\times\mathbb CG(\nu))\cong\pi_1(\mathbb S^m)$. Moreover, the double covering $p_{m,\nu}$ induces
\[
    (p_{m,\nu})_*
    \bigl(\pi_1(\mathbb S^m\times\mathbb CG(\nu))\bigr)
    =
    \begin{cases}
    2\mathbb Z\subset\mathbb Z, & m=1,\\
    0\subset\mathbb Z_2, & m>1.
    \end{cases}
\]
Likewise,
\[
    (p_{r,\mu})_*
    \bigl(\pi_1(\mathbb S^r\times\mathbb CG(\mu))\bigr)
    =
    \begin{cases}
    2\mathbb Z\subset\mathbb Z, & r=1,\\
    0\subset\mathbb Z_2, & r>1.
    \end{cases}
\] 
By the lifting criterion, it suffices to prove
\[
    (f\circ p_{m,\nu})_*
    \bigl(\pi_1(\mathbb S^m\times\mathbb CG(\nu))\bigr)
    \subseteq
    (p_{r,\mu})_*
    \bigl(\pi_1(\mathbb S^r\times\mathbb CG(\mu))\bigr).
\]

If $m>1$, then $\pi_1(\mathbb S^m\times\mathbb CG(\nu))=0$, so the required inclusion is immediate. 

If $m=1$, then $\pi_1(\mathbb S^1\times\mathbb CG(\nu))\cong\mathbb Z$, and $(p_{1,\nu})_*(\mathbb Z)=2\mathbb Z$. Hence
$(f\circ p_{1,\nu})_*(\mathbb Z)=f_*(2\mathbb Z).$ Now observe the two cases: $r>1$ and $r=1.$
If $r>1$, then $\pi_1(P(r,\mu))=\mathbb Z_2$, and therefore $f_*(2\mathbb Z)=0=(p_{r,\mu})_* \bigl(\pi_1(\mathbb S^r\times\mathbb CG(\mu))\bigr).$ If $r=1$, then $f_*\colon\mathbb Z\to\mathbb Z$ is a group homomorphism, so $f_*(2\mathbb Z) \subseteq 2\mathbb Z =(p_{1,\mu})_* \bigl(\pi_1(\mathbb S^1\times\mathbb CG(\mu))\bigr).$

Thus the lifting criterion is satisfied in all cases. Consequently, there exists a continuous map $\widetilde f\colon \mathbb S^m\times\mathbb CG(\nu)\to \mathbb S^r\times\mathbb CG(\mu)$ such that $p_{r,\mu}\circ\widetilde f=f\circ p_{m,\nu}.$ We summarize the observation in the following remark.
\begin{remark}\label{covering lift}
    For any continuous map $f:P(m,\nu)\to P(r,\mu)$, there exists a lift $\widetilde{f}:\mathbb S^m\times\mathbb CG(\nu)\to\mathbb S^r\times\mathbb CG(\mu)$ making the diagram commute; that is, $p_{r,\mu}\circ\widetilde{f}=f\circ p_{m,\nu}$, where $p_{m,\nu}$ and $p_{r,\mu}$ denote the respective covering projections.
\end{remark}

\section{Nonexistence of nonzero degree maps}\label{nonexistance on nonzero deg}

In this section, we study the degrees of maps between generalized Dold manifolds $P(m,\nu)$. Recall that $P(m,\nu)$ is fibred by complex partial flag manifold $\mathbb CG(\nu)$ over real projective space $\mathbb RP^n$. Our approach builds on the study of the nonexistence of nonzero-degree maps between complex partial flag manifolds carried out in \cite{Man26}. A detailed understanding of the fibers $\mathbb{C}G(\nu)$ in this context allows us to derive certain conclusions for the total spaces $P(m,\nu)$.

In view of Lemma~\ref{deg zero iff not injective}, the problem of determining whether a map can have nonzero degree reduces to studying the existence of algebra monomorphisms between the corresponding rational cohomology algebras. For complex partial flag manifolds, there are several instances in which homomorphisms between cohomology algebras are determined by their behavior on degree-two cohomology classes, together with the heights of these classes; see, for example, \cite{ GH78, HH84, MS26, Man26}. Hence, Remark~\ref{covering lift} motivates to begin our study by analyzing the heights of degree-two cohomology classes in the products of spheres and complex partial flag manifolds. This will provide
some necessary understanding for studying the degrees of maps between generalized Dold manifolds.

\subsection{Heights of degree two cohomology classes}

Let $x_1,\ldots,x_\ell$ denote the first Chern classes of the canonical vector bundles of rank $n_i$ over $\mathbb CG(\nu)$, where $\nu=(n_1, \ldots , n_\ell)$. Recall the cohomology algebra of $\mathbb CG(\nu)$ from \eqref{cohom of flag} and note that $x_1,\ldots, x_\ell$ form a $\mathbb Z$-basis of the $\mathbb Z$-module $H^2(\mathbb CG(\nu);\mathbb Z)$. 

To study the heights of degree-two classes in $H^*(\mathbb  S^m \times \mathbb CG(\nu);\mathbb Z)$, we use the following result from \cite{BHH83} on heights of degree-two classes of complex partial flag manifolds.

\begin{theorem}[{\cite[Theorem 3.1]{BHH83}}]\label{heights of deg 2 elements}
    Let $x = \sum_{i=1}^{\ell} a_i x_i \in H^2(\mathbb{C}G(\nu); \mathbb{Z})$ and let $\{b_1, b_2, \ldots, b_t\}$ be the set of distinct values in $\{a_1, a_2, \ldots, a_\ell\}$. 
    For each $1 \le j \le t$, set
    \[
    m_j = \sum_{\{i\; : \; a_i = b_j\}} n_i.
    \]
    Then the height of $x$, denoted $h(x)$, is given by
    \[
    h(x) = \sum_{1 \le i < j \le t} m_i m_j.
    \]
\end{theorem}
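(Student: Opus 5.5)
The plan is to reduce to the case where all coefficients are distinct, by passing to a coarser flag manifold through which $x$ factors, and then to use that $x$ is (up to sign) a Kähler class there.

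\textbf{Step 1: the coarse flag manifold.} Group the indices $\{1,\dots,\ell\}$ into the classes $I_j=\{i : a_i=b_j\}$, $1\le j\le t$, and set $\mu=(m_1,\dots,m_t)$. There is a natural smooth fibre bundle
\[
\pi\colon \mathbb CG(\nu)\longrightarrow \mathbb CG(\mu),\qquad (V_1,\dots,V_\ell)\longmapsto \Bigl(\textstyle\bigoplus_{i\in I_1}V_i,\ \dots,\ \bigoplus_{i\in I_t}V_i\Bigr).
\]
Its fibre is $\prod_j \mathbb CG(\nu|_{I_j})$, where $\nu|_{I_j}$ is the partition $(n_i)_{i\in I_j}$. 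If $\gamma'_j$ denotes the $j$-th canonical bundle on $\mathbb CG(\mu)$, then $\pi^*\gamma'_j=\bigoplus_{i\in I_j}\gamma_i$. Writing $z_j=c_1(\gamma'_j)$, this gives $\pi^*z_j=\sum_{i\in I_j}x_i$, and hence
\[
x=\pi^*\Bigl(\sum_{j=1}^t b_j z_j\Bigr)=:\pi^*(y).
\]

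\textbf{Step 2: injectivity of $\pi^*$.} The fibre is a product of flag manifolds, so its cohomology is free and concentrated in even degrees. Its cohomology is generated by Chern classes of the tautological bundles, and these extend to global classes on $\mathbb CG(\nu)$. By Leray--Hirsch, $\pi^*$ is therefore injective on rational cohomology. Consequently $h(x)=h(y)$, computed in $H^*(\mathbb CG(\mu);\mathbb Q)$.

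\textbf{Step 3: the distinct-coefficient case.} It remains to show that $h(y)=\dim_{\mathbb C}\mathbb CG(\mu)=\sum_{i<j}m_im_j$ when $b_1,\dots,b_t$ are pairwise distinct. The upper bound $y^{D+1}=0$, where $D=\dim_{\mathbb C}\mathbb CG(\mu)$, is automatic for dimensional reasons. For the lower bound we need $y^D\neq0$.

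Since $\sum_j z_j=c_1(\varepsilon^n)=0$, we may add a constant to all the $b_j$ without changing $y$. Reorder so that $b_1>b_2>\dots>b_t$. Then we can write
\[
y=-\sum_{k=1}^{t-1}(b_k-b_{k+1})\,(z_1+\dots+z_k).
\]
Each coefficient $b_k-b_{k+1}$ is positive. Each class $-(z_1+\dots+z_k)=c_1\bigl(\det(\gamma'_1\oplus\cdots\oplus\gamma'_k)^*\bigr)$ is the pullback of the Plücker hyperplane class under the projection to the Grassmannian $\mathbb CG(m_1+\dots+m_k,\ m_{k+1}+\dots+m_t)$. A strictly positive combination of these classes is ample on $\mathbb CG(\mu)$, since the product of these Plücker embeddings is an embedding. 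Equivalently, $y$ is the Kirillov--Kostant--Souriau class of the coadjoint orbit of $\operatorname{diag}(b)$, which is $U(n)$-equivariantly $\mathbb CG(\mu)$.

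Hence $y$ is represented by a Kähler form $\omega$, and $\int_{\mathbb CG(\mu)}\omega^D>0$, so $y^D\neq0$. Combining Steps 1--3 yields $h(x)=\sum_{i<j}m_im_j$.

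\textbf{Main obstacle.} The delicate point is Step 3: verifying ampleness, or nondegeneracy of the orbit form, precisely when the $b_j$ are distinct, with correct sign conventions for the $z_j$. If that argument becomes awkward, I would fall back on a direct computation. One pulls back further to the full flag manifold, where $H^*$ is the coinvariant algebra. There one evaluates $\langle y^D,[\mathbb CG(\mu)]\rangle$ through the Weyl-group localization formula, which expresses it up to a nonzero factor as $\prod_{\alpha}\alpha(b)^{-1}\cdot D!\cdot\prod_{\alpha}\alpha(b)$ over the roots not vanishing on $b$. This quantity is nonzero exactly when the $b_j$ are distinct.
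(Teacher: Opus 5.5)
The paper gives no proof of this statement. It is quoted from \cite{BHH83} and used as a black box, for Corollary~\ref{kahler class descripition} and for the height comparisons in Lemmas~\ref{no mono m r distinct} and~\ref{no monomorphism}, so there is no in-paper argument to compare yours with.

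Your proof is correct and self-contained. The steps are:
\begin{enumerate}
\item coarsen along $\pi\colon\mathbb CG(\nu)\to\mathbb CG(m_1,\dots,m_t)$, so that $x=\pi^*y$;
\item get injectivity of $\pi^*$ from Leray--Hirsch (equivalently, from collapse of the Serre spectral sequence, since everything is in even degrees over a simply connected base);
\item get $y^D\neq 0$ from ampleness.
\end{enumerate}
Compared with the bare citation, this explains the Corollary rather than deducing it, and it makes its K\"ahler assertion precise. A class with pairwise distinct coefficients is K\"ahler for the invariant complex structure adapted to the ordering of its coefficients, not for one fixed complex structure once $\ell\ge 3$. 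The statement is over $\mathbb Z$ while you work over $\mathbb Q$, so add that $H^*(\mathbb CG(\nu);\mathbb Z)$ is torsion-free, hence integral and rational heights agree.

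Two slips need fixing; neither affects the conclusion.

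First, your telescoping identity has the wrong sign. For $b_1>\dots>b_t$,
\[
\sum_{k=1}^{t-1}(b_k-b_{k+1})(z_1+\dots+z_k)=\sum_{j=1}^{t}(b_j-b_t)\,z_j=y .
\]
So it is $-y$, not $y$, that is a positive combination of the Pl\"ucker classes $-(z_1+\dots+z_k)$. On $\mathbb CP^1$, for instance, $y=(b_1-b_2)z_1$ with $z_1=c_1(\mathcal O(-1))$. Either order the $b_j$ increasingly, or simply note that $h(y)=h(-y)$. You should also say explicitly that relabelling the blocks changes which invariant complex structure you use, namely the one making $W_1\subset W_1\oplus W_2\subset\cdots$ holomorphic. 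This is harmless because $y^D\neq 0$ is a topological statement.

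Second, the fallback formula as written, $\prod_\alpha\alpha(b)^{-1}\cdot D!\cdot\prod_\alpha\alpha(b)$, cancels to $D!$ independently of $b$, which cannot be right. The intended expression is
\[
\langle y^D,[\mathbb CG(\mu)]\rangle=\pm D!\prod_\alpha\frac{\alpha(b)}{\alpha(\rho)},
\]
taken over the positive roots $e_p-e_q$ with $p$ and $q$ in different blocks, so that $\alpha(\rho)=q-p$. The main argument does not need this, so either correct it or drop it.
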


\begin{corollary}\label{kahler class descripition}
    By Theorem \ref{heights of deg 2 elements}, a degree-two cohomology class $x=\sum_{i=1}^{\ell}a_i x_i$ has maximal height, equal to the complex dimension of $\mathbb{C}G(\nu)$, namely
    $\sum_{1\leq i<j\leq \ell}n_i n_j$, if and only if the coefficients $a_1,\ldots,a_\ell$ are pairwise distinct. Such degree-two cohomology classes are precisely the K\"ahler classes in cohomology for $\mathbb{C}G(\nu)$.
\end{corollary}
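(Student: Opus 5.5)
This corollary is a direct consequence of Theorem~\ref{heights of deg 2 elements}, so the plan is to maximize the formula $h(x)=\sum_{1\le i<j\le t} m_i m_j$ over all groupings of the $n_i$. First fix $x=\sum a_i x_i$. The distinct values $b_1,\dots,b_t$ among the coefficients induce a partition of the index set $\{1,\dots,\ell\}$ into blocks $B_j=\{i : a_i=b_j\}$, with $m_j=\sum_{i\in B_j} n_i$.

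Next expand $\sum_{1\le i<j\le \ell} n_in_j$ and split each product $n_in_j$ according to whether $i$ and $j$ lie in the same block or in different blocks. This gives
\[
\sum_{1\le i<j\le \ell} n_in_j \;=\; \sum_{1\le p<q\le t} m_pm_q \;+\; \sum_{p=1}^t \sum_{\substack{i<j\\ i,j\in B_p}} n_in_j .
\]
Since every $n_i\ge 1$, the second sum is nonnegative. It vanishes if and only if every block is a singleton, because any block containing two indices contributes a strictly positive term.

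Consequently $h(x)\le \sum_{i<j} n_in_j=\dim_{\mathbb C}\mathbb CG(\nu)$, and equality holds exactly when all the $a_i$ are pairwise distinct. The bound $h(x)\le\dim_{\mathbb C}$ also follows independently, since $x^{k}=0$ for $2k>2\dim_{\mathbb C}\mathbb CG(\nu)$.

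For the Kähler statement, I would use the standard identification of invariant Kähler forms on $U(n)/(U(n_1)\times\cdots\times U(n_\ell))$. Their cohomology classes are the classes $\sum a_ix_i$ with all $a_i$ distinct, up to sign convention, real coefficients, and ordering of the $a_i$ giving possibly different complex structures. If "Kähler class" is read more loosely as a class with $x^{D}\neq 0$, where $D=\dim_{\mathbb C}\mathbb CG(\nu)$, then the equivalence is exactly the maximal-height statement already proved.

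The only delicate point is this last identification, which relies on external Lie-theoretic facts rather than the height formula. I would cite it rather than prove it. The combinatorial inequality is routine.
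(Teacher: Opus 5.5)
Your proof is correct and takes the paper's route: the paper states the corollary as an immediate consequence of the height formula of Theorem~\ref{heights of deg 2 elements}, and your splitting of $\sum_{i<j}n_in_j$ into cross-block and within-block terms is the routine verification it leaves implicit. The only point worth adding is that $\sum_i x_i=0$, so the $a_i$ are determined only up to a common shift; this shift does not change the blocks, so ``pairwise distinct'' is well defined.

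The paper gives no argument for the K\"ahler identification, so citing the standard Lie-theoretic description is the right move. Your caveats are warranted: for a fixed invariant complex structure, the K\"ahler cone is a single chamber of the braid arrangement, not the whole complement of the hyperplanes $a_p=a_q$.
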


We now determine the heights of degree-two  cohomology classes in $H^*(\mathbb S^m\times\mathbb CG(\nu);\mathbb Z)$.

If $m\neq2$, then $H^2(\mathbb S^m;\mathbb Z)=0$, and hence the K\"unneth theorem gives
\[
    H^2(\mathbb S^m\times\mathbb CG(\nu);\mathbb Z)
    \cong H^2(\mathbb CG(\nu);\mathbb Z).
\]
Thus every degree-two cohomology class has the form $z=\sum_{i=1}^{\ell}a_i x_i.$ Since the second projection map $p_2: \mathbb S^m \times \mathbb CG(\nu)\to \mathbb CG(\nu)$ induces monomorphism in cohomology, the height of $z$ is given by Theorem \ref{heights of deg 2 elements}.
Therefore, when $m\neq2$, taking the product with the sphere $\mathbb S^m$ does not change the height function on degree-two cohomology.

Suppose now that $m=2$. Let $u$ be a generator of $H^2(\mathbb S^2;\mathbb Z)$, and denote $p_1^*(u)$ by the same symbol $u$, where $p_1\colon\mathbb S^2\times\mathbb CG(\nu)\to\mathbb S^2$ is the first projection. Then
\[
    H^2(\mathbb S^2\times\mathbb CG(\nu);\mathbb Z)
    \cong
    \mathbb Z u\oplus H^2(\mathbb CG(\nu);\mathbb Z).
\]
Hence every degree-two cohomology class $z\in H^2(\mathbb S^2\times \mathbb CG(\nu);\mathbb Z)$ has a unique expression
\[
z=au+x,
\qquad
a\in\mathbb Z,\quad
x\in H^2(\mathbb CG(\nu);\mathbb Z).
\]

Suppose first that $x\neq0$, and put $h=h(x)$. Now we have
\[
  z^{h+1}=(d+1)aux^h\neq 0,
\]
since  $u^2=0$ and $ux^h\neq0$ in $H^*(\mathbb S^m \times\mathbb CG(\nu);\mathbb Z)$. Consequently, if $a\neq0$, then $z^{h+1}\neq0$. On the other hand,
\[
z^{h+2}
=(h+2)aux^{h+1}=0.
\]
It follows that $h(au+x)=h(x)+1$ if $a\neq 0, x\neq 0$.  Thus, for $m=2$, we have
\begin{equation}\label{heights for deg two for products}
     h(au+x)=
    \begin{cases}
    h(x),& a=0,\ x\neq0,\\
    h(x)+1,& a\neq0,\ x\neq0,\\
    1,& a\neq0,\ x=0.
    \end{cases}
\end{equation}

Hence a nonzero component in the direction of $u$ increases the height by one, whereas for $m\neq2$ the height is exactly coming from the partial flag manifold.

\subsection{Degrees of maps from $P(m,\mu)$ to $P(r, \nu)$, when $m\neq r$} Now we focus our attention to studying the existence of nonzero degree maps between generalized Dold manifolds $P(m,\nu)$ and $P(r, \mu)$ for the case when the sphere dimensions $m $ and $r$ are different. First we prove the following lemma.

\begin{lemma}\label{no mono m r distinct}
    Let $\phi:H^*(\mathbb S^m\times \mathbb FG(\nu);\mathbb Q)\longrightarrow H^*(\mathbb S^r\times \mathbb FG(\mu);\mathbb Q)$ be a graded algebra homomorphism such that $m\neq r$ and $m+\dim_{\mathbb R}\mathbb FG(\nu)= r+\dim_{\mathbb R}\mathbb FG(\mu)$. Further assume that $(m, \mu)\neq (2, (1,1)).$ Then $\phi$ is not a monomorphism.
\end{lemma}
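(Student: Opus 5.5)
The plan is to argue by contradiction. Suppose $\phi$ is a graded monomorphism. Write $D_\nu=\dim_{\mathbb R}\mathbb FG(\nu)$ and $D_\mu=\dim_{\mathbb R}\mathbb FG(\mu)$, so that $N:=m+D_\nu=r+D_\mu$ is the common formal dimension. Both algebras are Poincaré duality algebras of dimension $N$.

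\textbf{Step 1: the top class.} Since $\phi$ is injective and the two top degrees coincide, $\phi(u\,\omega_\nu)$ is a nonzero multiple of $u'\omega_\mu$. Here $u,u'$ are the sphere classes and $\omega_\nu,\omega_\mu$ the top classes of the flag manifolds. Consequently $\phi(u)\,\phi(\omega_\nu)\neq0$, and for every $x$ with $x^k\neq0$ we get $\phi(x)^k\neq0$. Recall also that $H^*(\mathbb FG(\cdot);\mathbb Q)$ is concentrated in even degrees (degrees divisible by $4$ when $\mathbb F=\mathbb H$). The argument then splits by parity of $m$ and $r$.

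\textbf{Step 2: parity cases.}
\begin{enumerate}[(a)]
\item If $m$ is odd and $r$ is even, the target has no odd cohomology while $u\neq0$ is odd, so $\phi(u)=0$, a contradiction. If $m$ is even and $r$ is odd, then $D_\nu$ and $D_\mu$ are both even, so $N=m+D_\nu$ is even and $N=r+D_\mu$ is odd, which is impossible.
\item If $m,r$ are both odd and $m<r$, then $H^m$ of the target is even flag cohomology or zero, so $\phi(u)=0$, a contradiction.
\item If $m,r$ are both odd and $m>r$, then $\phi(u)=u'b$ with $b\in H^{m-r}(\mathbb FG(\mu))$ and $m-r>0$. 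Injectivity forces $\phi$ to map the even part $H^*(\mathbb FG(\nu))$ injectively into $H^*(\mathbb FG(\mu))$, and the odd part $u\,H^*(\mathbb FG(\nu))$ injectively into $u'b\,H^*(\mathbb FG(\mu))$. Hence
\[
\dim_{\mathbb Q}H^*(\mathbb FG(\nu))\le\dim_{\mathbb Q}\bigl(b\,H^*(\mathbb FG(\mu))\bigr),
\]
and the right side is strictly smaller than $\dim H^*(\mathbb FG(\mu))$ because $b$ has positive degree and so annihilates $\omega_\mu$. I will then aim to contradict this using the fact that $b\,H^*(\mathbb FG(\mu))\cong H^*(\mathbb FG(\mu))/\operatorname{Ann}(b)$ must contain a copy of $H^*(\mathbb FG(\nu))$ with top class sitting in degree $D_\mu-(m-r)=D_\nu$.
\end{enumerate}

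\textbf{Step 3: both $m,r$ even, via heights.} Take a Kähler class $x\in H^2(\mathbb CG(\nu))$ (for $\mathbb H$, the analogous degree-$4$ generic class), so that $x^{D_\nu/2}\neq0$ by Corollary~\ref{kahler class descripition}. If $m<r$, then $D_\nu>D_\mu$. The element $\phi(x)$ has height at most $D_\mu/2$, plus $1$ only when $r=2$, by \eqref{heights for deg two for products}. Since $r>m\ge2$ forces $r\ge4$, we get $h(\phi(x))\le D_\mu/2<D_\nu/2$, contradicting Step 1. If $m>r$ with both even, write $\phi(u)=a+u'b$. Then $\phi(u)^2=0$ gives $a^2=0$ and $ab=0$. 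Together with $\phi(u)\phi(\omega_\nu)\neq0$ and the same dimension count as in Step 2(c), this should give a contradiction. The exception is $r=2$ and $\mu=(1,1)$, where the target is $\mathbb S^2\times\mathbb S^2$ and the extra height from $u'$ genuinely allows an embedding; this is exactly the excluded case, and I will check that it is the only place the height bound degenerates.

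\textbf{Expected main obstacle.} The case $m>r$ is the hard part. There the sphere class must map to a product $u'b$, or to $a+u'b$ with $a^2=0$, and a pure height or Betti-number count may not close the argument. I will first try the dimension inequality $\dim H^*(\mathbb FG(\nu))\le\dim H^*(\mathbb FG(\mu))/\operatorname{Ann}(b)$, combined with the fact that $\phi$ restricts to a monomorphism $H^*(\mathbb FG(\nu))\to H^*(\mathbb FG(\mu))$ whose image is annihilated by nothing that kills $b$. If that fails, I will fall back on the degree-two analysis from \cite{Man26} to control $\phi$ on $H^2$.
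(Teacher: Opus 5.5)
Your parity cases (Step 2(a), 2(b)) and the $m<r$ half of Step 3 are correct, and the latter is the same height argument as the paper's Case 1: a K\"ahler class of the larger flag manifold has height exceeding every degree-two height on the other side. There is, however, a genuine gap in the case $m>r$, namely Step 2(c) and the second half of Step 3, which you leave at ``should give a contradiction''. It cannot be closed, because the statement is false there.

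\emph{Case $m,r$ even.} The map $\phi\colon H^*(\mathbb S^4\times\mathbb CP^1;\mathbb Q)\to H^*(\mathbb S^2\times\mathbb CP^2;\mathbb Q)$ given by $u\mapsto h^2$, $x\mapsto u'$ is a monomorphism. It is $f^*$ for the degree-one map $\mathbb S^2\times\mathbb CP^2\to\mathbb CP^1\times\mathbb S^4$ (identity times collapse), and $(m,\mu)=(4,(1,2))$ is not excluded. In your notation $a=h^2$, $b=0$ satisfy $a^2=0$, $ab=0$ and $\phi(u)\phi(\omega_\nu)\neq0$, so those constraints give nothing. The configuration you single out, $r=2$ and $\mu=(1,1)$, forces $D_\nu=0$. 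For $\phi$ as written, the real $\mathbb S^2$-phenomenon is $r=2$, $\nu=(1,1)$, so the exclusion is attached to the wrong pair.

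\emph{Case $m,r$ odd, with no $\mathbb CP^1$ involved.} The map $H^*(\mathbb S^3\times\mathbb CP^2;\mathbb Q)\to H^*(\mathbb S^1\times\mathbb CG(1,1,1);\mathbb Q)$ given by $y\mapsto x_1$, $u\mapsto u'x_2$ is injective, because $x_1^3=0$ and $x_1^2x_2$ is a nonzero top class. Your inequality becomes $3\le 4$, so the dimension count yields no contradiction.

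The mechanism is general. If $\mu$ refines $\nu$, take the pullback along the bundle $\mathbb CG(\mu)\to\mathbb CG(\nu)$ on the flag part, and send $u\mapsto u'c$ with $c$ restricting to the orientation class of the fibre. Leray--Hirsch then gives injectivity.

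The paper's proof does not supply the missing idea either. It silently lets $\phi$ run from $H^*(\mathbb S^r\times\mathbb CG(\mu))$ to $H^*(\mathbb S^m\times\mathbb CG(\nu))$ (the direction of $f^*$), the reverse of the displayed statement. It also imports orientability of the Dold manifolds, which is not a hypothesis of the lemma, to get $|N_1-N_2|\ge 2$. Its Case 2 is the counterpart of your open cases, and there it infers $b=0$ from $b^{N_1}z^{N_1}=0$. That inference is invalid when $z$ is not K\"ahler.

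The $\mathbb S^3\times\mathbb CP^2$ example above refutes the lemma in the paper's direction as well, with $(m,\mu)=(1,(1,2))$. Even granting orientability, the following is a monomorphism:
\begin{itemize}
\item $H^*(\mathbb S^6\times\mathbb CP^3;\mathbb Q)\to H^*(\mathbb S^2\times\mathbb CG(1,1,2);\mathbb Q)$, sending the hyperplane class $y\mapsto x_1$ and the $\mathbb S^6$-class to $u\,x_2^2$;
\item both $P(2,(1,1,2))$ and $P(6,(1,3))$ are orientable, since $m+\dim_{\mathbb C}$ equals $7$ and $9$;
\item $(m,\mu)=(2,(1,3))$, which is not excluded;
\item $\phi(y)=x_1$ has $b\neq 0$ but $x_1^5=0$, exactly where the paper's Case 2 breaks.
\end{itemize}

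So what is actually true is the one-sided statement your parity and height arguments already establish: no monomorphism exists when the source of $\phi$ carries the smaller sphere, equivalently the larger flag manifold. The remaining cases require changing the hypotheses, not a cleverer argument.
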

\begin{proof}
    We only prove the result for $\mathbb{F}=\mathbb{C}$. Since the integral cohomology $H^*(\mathbb{H}G(n_1,n_2);\mathbb{Z})$ admits the same presentation as $H^*(\mathbb{C}G(n_1,n_2);\mathbb{Z})$, up to a degree-doubling isomorphism, the case $\mathbb{F}=\mathbb{H}$ follows by a similar argument.

    Let $N_1 = \dim_{\mathbb{C}}(\mathbb{C}G(\nu))$ and $N_2 = \dim_{\mathbb{C}}(\mathbb{C}G(\mu))$. The real dimensions of the generalized Dold manifolds are given by
    \[
    \dim_{\mathbb{R}} P(m, \nu) = m + 2N_1 \quad \text{and} \quad \dim_{\mathbb{R}} P(r, \mu) = r + 2N_2.
    \]
    Recall that from the observation \eqref{orientability of GDM} a generalized Dold manifold $P(m, \nu)$ is orientable if and only if $m + N_1$ is odd. Since both $P(m,\nu)$ and $P(r,\mu)$ are assumed to be oriented and have equal dimension ($m + 2N_1 = r + 2N_2$), the integers $N_1$ and $N_2$ must have the same , using \eqref{orientability of GDM}. Moreover, since $m \neq r$, we have $2(N_1 - N_2) = r - m \neq 0$, which implies $|N_1 - N_2| \ge 2$. We shall proceed with two cases: (i) $N_2-N_1\ge2$ and (ii) $N_1-N_2\ge 2$.

    Let $\{y_1, \ldots, y_\wp\}$ denote the basis for $H^2(\mathbb{C}G(\mu); \mathbb{Q})$, where $y_i$ is the the first Chern class of the canonical vector bundle over $\mathbb CG(\mu)$ of rank $m_i$. Choose a K\"ahler class $y \in H^2(\mathbb{C}G(\mu); \mathbb{Q})$ defined as a linear combination of $y_1, \ldots, y_\wp$ with distinct coefficients. By Corollary~\ref{kahler class descripition}, the top power $y^{N_2} \in H^{2N_2}(\mathbb{C}G(\mu); \mathbb{Q})$ is nonzero, whereas $y^{k} = 0$ for all $k > N_2$.
    The image of $y$ under $\phi$ is expressed as
    \[
    \phi(y) = a u_m + b \cdot z, 
    \]
    where $a, b \in \mathbb{Q}$, $u_m \in H^m(\mathbb{S}^m; \mathbb{Q})$ is the fundamental cohomology class, and $z \in H^2(\mathbb{C}G(\nu); \mathbb{Q})$. Note that $a = 0$ whenever $m \neq 2$.

    \medskip
    \noindent\textbf{Case 1:} $N_2 - N_1 \ge 2$. 
    Applying $\phi$ to $y^{N_2} \neq 0$ and using $u_m^2 = 0$, we obtain
    \[
    \phi(y^{N_2}) = N_2 a b^{N_2-1} z^{N_2-1} u_m + b^{N_2} z^{N_2}.
    \]
    Since $N_2 \ge N_1 + 2 > \dim_{\mathbb{C}}(\mathbb{C}G(\nu))$, both $z^{N_2-1}$ and $z^{N_2}$ vanish in $H^*(\mathbb{C}G(\nu); \mathbb{Q})$ by degree considerations. Hence, $\phi(y^{N_2}) = 0$, which shows that $\ker(\phi) \neq \{0\}$, so $\phi$ is not a monomorphism.

    \medskip
    \noindent\textbf{Case 2:} $N_1 - N_2 \ge 2$. 
    Since $y^{N_1} = 0$ in $H^*(\mathbb{C}G(\mu); \mathbb{Q})$, we have
    \[
    0 = \phi(y^{N_1}) = N_1 a b^{N_1-1} z^{N_1-1} u_m + b^{N_1} z^{N_1}.
    \]
    This implies that $b = 0$ and $ a b= 0$.

    If $a = 0$, $\phi(y) = 0$ and so $\phi$ is not a monomorphism.

    If $a \neq 0$, $\phi(y) = au_2$, since  $m=2$. Now we have two cases: (i) $y^2 \neq 0$ and (ii) $y^2 = 0$.
    In case (i), $0 \neq y^2 \mapsto \phi(y^2) = a^2 u_2^2 = 0$ implies $\phi$ is not a monomorphism. In case (ii), $y^2 = 0$ implies $\mathbb{C}G(\mu) = \mathbb CP^1$; and consequently, $(2, \mu) = (2,(1,1))$ which is not possible because of hypothesis (ii).

 Therefore, $\phi$ cannot be  a monomorphism.
\end{proof}

As an immediate consequence, we have the following proposition.

\begin{proposition}\label{degree zero, m r are distinct}
    Let $m,r\in\mathbb N$ and $\nu, \mu$ be two finite sequences of natural numbers such that $\dim (\mathbb S^m\times \mathbb CG(\nu))=\dim (\mathbb S^r\times \mathbb CG(\mu))$ and $m\neq r.$ Assume that $(m,\mu)\neq(2,(1,1)).$
    Then for every continuous map $f: S^m\times \mathbb CG(\nu) \longrightarrow S^r\times \mathbb CG(\mu)$, the degree of $f$ is zero. Further, any continuous map $g: P(m,\nu)\longrightarrow P(r, \mu)$ has degree zero,  assuming that $P(m,\nu)$ and $P(r,\mu)$ are orientable.
\end{proposition}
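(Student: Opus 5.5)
The plan is to reduce both assertions to the algebraic obstruction of Lemma~\ref{no mono m r distinct}, combined with Lemma~\ref{deg zero iff not injective}. For the product statement, I suppose $f\colon \mathbb S^m\times\mathbb CG(\nu)\to\mathbb S^r\times\mathbb CG(\mu)$ has nonzero degree. By Lemma~\ref{deg zero iff not injective}, $\phi:=f^*\colon H^*(\mathbb S^r\times\mathbb CG(\mu);\mathbb Q)\to H^*(\mathbb S^m\times\mathbb CG(\nu);\mathbb Q)$ is then a graded monomorphism. I would run the argument of Lemma~\ref{no mono m r distinct} with this $\phi$, being careful about the roles of the two factors. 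The Kähler class $y$ is taken in $H^2(\mathbb CG(\mu);\mathbb Q)$, with $y^{N_2}\neq0$ and $y^{N_2+1}=0$ by Corollary~\ref{kahler class descripition}. Its image is written $\phi(y)=a u_m+bz$ with $z\in H^2(\mathbb CG(\nu);\mathbb Q)$, and $a=0$ unless $m=2$. In this orientation of the argument, the only surviving exceptional configuration in Case~2 is $m=2$ and $\mathbb CG(\mu)=\mathbb CP^1$. That is exactly the excluded pair $(m,\mu)=(2,(1,1))$, which explains the hypothesis as stated.

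The main obstacle is that for the products no orientability parity is available, so the gap $|N_1-N_2|\ge2$ used in the lemma may fail. We only know $2(N_1-N_2)=r-m\neq0$, hence $|N_1-N_2|\ge1$. I would treat $|N_1-N_2|=1$ separately. If $N_2=N_1+1$, then Case~1 still works. Indeed, $z^{N_2}=0$, and the cross term $N_2ab^{N_2-1}z^{N_1}u_m$ can survive only if $m=2$. In that case $r=m-2=0$, which is excluded. If $N_1=N_2+1$, then $r=m+2$, and I would test $\phi$ on the top class $u_r y^{N_2}$, which must map to a nonzero multiple of $u_m\omega_\nu$. Writing $\phi(u_r)=u_m w+c$ with $w\in H^2(\mathbb CG(\nu))$ and $c\in H^{m+2}(\mathbb CG(\nu))$, the top component is essentially $b^{N_2}\,u_m w z^{N_2}$. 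I would first try to kill it using $u_r^2=0$ and $\phi(y)^{N_2+1}=0$. The latter gives $b^{N_2+1}z^{N_2+1}=0$, so $bz$ has height at most $N_2=N_1-1$. I would then combine this with Theorem~\ref{heights of deg 2 elements} and Poincaré duality in $\mathbb CG(\nu)$ to show that $w z^{N_2}$ cannot be a nonzero multiple of $\omega_\nu$ compatibly with $\phi$ being injective on $H^2$. If this fails, I would fall back on comparing the dimensions of the low-degree graded pieces, as in Case~2.

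For the generalized Dold manifolds, let $g\colon P(m,\nu)\to P(r,\mu)$ be a map between orientable manifolds. Remark~\ref{covering lift} provides a lift $\widetilde g$ with $p_{r,\mu}\circ\widetilde g=g\circ p_{m,\nu}$. Both covering projections are double covers of oriented manifolds, and orientations on the covers can be chosen so that each has degree $2$. Multiplicativity of degree then gives $2\deg\widetilde g=2\deg g$. Since the first part shows $\deg\widetilde g=0$, we get $\deg g=0$. Alternatively, in the orientable case the parity argument already gives $|N_1-N_2|\ge2$, so Lemma~\ref{no mono m r distinct} applies directly to $\widetilde g^*$. By Theorem~\ref{cohomology of P(m,nu)}, $H^*(P(\cdot);\mathbb Q)$ embeds in the cover's cohomology as the $\theta^*$-fixed part. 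Hence injectivity of $g^*$ would force injectivity of $\widetilde g^*$ on top classes, which gives the same conclusion.
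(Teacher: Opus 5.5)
\textbf{There is a genuine gap, and it cannot be closed: the statement is false as written.} Two parts of your proposal are sound: you correctly saw that the orientability parity is not available for products, and your treatment of $N_2=N_1+1$ is fine. The failure is the Case~2 step you import from Lemma~\ref{no mono m r distinct}. From $0=\phi(y^{N_1})=N_1ab^{N_1-1}z^{N_1-1}u_m+b^{N_1}z^{N_1}$ one cannot conclude $b=0$, because that presupposes $z$ is K\"ahler in $\mathbb CG(\nu)$. Injectivity only forces $h(\phi(y))=N_2$, and Theorem~\ref{heights of deg 2 elements} allows non-K\"ahler classes of height $N_2$. The missing $2(N_1-N_2)$ degrees can then be supplied by $\phi(u_r)=u_m w$ with $w\,\phi(y)^{N_2}\neq 0$.

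This happens geometrically. Take $\nu=(1,1,2)$, $\mu=(1,3)$ and $r=m+4$. Let $\tau\colon\mathbb CP^3\to\mathbb HP^1\cong\mathbb S^4$ be the twistor map, so $\tau^*\iota_4=\pm h^2$, and let $c\colon\mathbb S^m\times\mathbb S^4\to\mathbb S^m\wedge\mathbb S^4=\mathbb S^{m+4}$ be the collapse. Define $f(s,(V_1,V_2,V_3))=(c(s,\tau(V_2)),V_1)$. It pulls the top class back to $\pm u_m\,x_1^3x_2^2$. Here $x_1^3$ is dual to a fibre $\{V_1=L\}\cong\mathbb P(L^\perp)\cong\mathbb CP^2$, on which $x_2$ restricts to $\pm$ the hyperplane class, so $\langle x_1^3x_2^2,[\mathbb CG(1,1,2)]\rangle=\pm1$. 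Hence $\deg f=\pm1$ for every $m\ge1$, although $(m,\mu)\neq(2,(1,1))$. In this example $\phi(y)=\pm x_1$ has height $3<N_1=5$, so $b\neq 0$.

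The Dold-manifold claim fails as well, so your alternative route (parity gives $|N_1-N_2|\ge2$, then apply the lemma) does not rescue it. The map $J(z)=(-\bar z_2,\bar z_1,-\bar z_4,\bar z_3)$ is a quaternionic structure on $\mathbb C^4$ commuting with complex conjugation. Therefore $\tau(L)=L+JL$ intertwines conjugation with an involution $\bar\kappa$ of $\mathbb S^4$ of degree one. The free involution $(-1)\times\bar\kappa$ of $\mathbb S^m\times\mathbb S^4$ and the antipodal map of $\mathbb S^{m+4}$ both have degree $(-1)^{m+1}$. Consequently, equivariant maps $G\colon\mathbb S^m\times\mathbb S^4\to\mathbb S^{m+4}$ exist, since all obstructions vanish for dimension reasons. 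Their degree can be shifted by any even integer by modifying $G$ on a small disc $D$ and on its image under the involution. Replacing $c$ by such a $G$ of nonzero degree makes $f$ equivariant. For $m$ even (e.g.\ $m=2$) it descends to a map $P(m,(1,1,2))\to P(m+4,(1,3))$ between orientable GDMs, of degree $\pm\deg G\neq0$.

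Your open subcase $N_1=N_2+1$ is likewise unprovable for products. Let $h\colon\mathbb S^m\times\mathbb CP^2\to\Sigma^m\mathbb CP^2\to\mathbb S^{m+2}$ have degree $2$ on the bottom cell; such $h$ exists because $2\Sigma^m\eta=0$. Then $(s,V)\mapsto(h(s,V_2),V_1)$ from $\mathbb S^m\times\mathbb CG(1,1,1)$ to $\mathbb S^{m+2}\times\mathbb CP^2$ has degree $\pm2$. In your notation, $wz^{N_2}=\pm2x_2x_1^2\neq0$.

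The paper's proof has exactly the same defects. It simply cites Lemma~\ref{no mono m r distinct}, whose proof uses the GDM parity even for the product statement and makes the same unjustified inference $b=0$. The hypotheses would have to be strengthened before any argument of this kind can succeed.
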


\begin{proof}
   By Remark~\ref{covering lift}, there exists a lift $\widetilde{g}\colon \mathbb{S}^m\times\mathbb{C}G(\nu) \longrightarrow \mathbb{S}^r\times\mathbb{C}G(\mu)$ such that $p_{r,\mu}\circ\widetilde{g}=g\circ p_{m,\nu}$. Since $\deg(p_{m,\nu})=\deg(p_{r,\mu})=2$, it follows that
    $\deg(g)=\deg(\widetilde{g})$. Therefore, it is enough to show that every continuous map $f:\mathbb S^m\times\mathbb FG(\nu) \longrightarrow \mathbb S^r\times\mathbb FG(\mu)$, where $\mathbb F\in\{ \mathbb C, \mathbb H\}$, has degree zero.
     Now using Lemma~\ref{no mono m r distinct}, $f^*$ cannot be a monomorphism. Therefore,  $\deg(f)$ is zero.  This completes the proof.
\end{proof}

Proposition~\ref{degree zero, m r are distinct} extends \cite[Lemma~7.3.3]{Man24}, which established the analogous result for generalized Dold manifolds fibered by complex Grassmannians.

\begin{remark}\label{rem:hypothesis_ii_discussion}
    The exceptional hypothesis {\rm (ii)} in Lemma~\ref{degree zero, m r are distinct} may not be strictly necessary. Specifically, if $(m, \mu = (2, (1,1))$, the  partial flag manifold involved in the codomain has complex dimension $ 1$, which corresponds uniquely to $\mathbb{C}G(\mu) = \mathbb{C}P^1 \cong \mathbb{S}^2$.
    Equating the real dimensions $\dim_{\mathbb{R}} P(2, \nu) $ and $\dim_{\mathbb{R}} P(r, (1,1))$ forces $r = 2\cdot\sum_{1\le i<j\le \ell} n_i n_j $. In this scenario, we consider maps of the form
    \[
    f \colon P\big(2, \nu\big) \longrightarrow P\big(D(\nu), (1,1)\big), \quad \text{where }D(\nu)=\sum_{1\le i<j\le \ell} 2n_i n_j.
    \]
    
    Topologically, one can construct continuous maps of degree $1$ from the product $\mathbb{S}^2 \times \mathbb{C}G(\nu)$ to $\mathbb{S}^{D(\nu)} \times \mathbb{S}^2$ by taking the identity map on the $\mathbb{S}^2$ factor and pairing it with the degree-one quotient map $\mathbb{C}G(\nu) \twoheadrightarrow \mathbb{S}^{D(\nu)}$ that collapses the $(D(\nu) - 1)$-skeleton of a standard CW-structure on $\mathbb{C}G(\nu)$ to a point. However, it requires further exploration to answer the question whether any such map can be chosen to be equivariant with respect to the involutions $\theta = \alpha \times \sigma$ on the product spaces, and thus descend to a continuous map from $P(2, \nu)$ to $P(D(\nu), (1,1))$ of nonzero Brouwer degree.
\end{remark}

\subsection{Degrees of maps from $P(m,\mu)$ to $P(m, \nu)$, when $(|\nu|, |\mu|)\neq (2,2)$}
We now study the existence of nonzero degree maps between two  generalized Dold manifolds fibred by distinct complex partial flag manifolds over the same real projective space.

First, let us recall the following lemmas from \cite{Man26}.

\begin{lemma}[\cite{Man26}, Lemma 3.5, Remark 3.7]\label{T(e_i)}
Let $T:\mathbb Q^m \to \mathbb Q^n$ be an injective linear map with $ m\ge 3$. Define the hypersurfaces $H_{p,q}:=\{(x_1,\dots,x_m)\in\mathbb Q^m : x_p=x_q\},$ for $1\le p<q\le m$, and $K_{u,v}:=\{(y_1,\dots,y_n)\in\mathbb Q^n : y_u=y_v\},$ for $1\le u<v\le n$.
Suppose
\[
T\Bigl(\mathbb Q^m\setminus \bigcup_{1\le p<q\le m} H_{p,q}\Bigr)
\;\subseteq\;
\mathbb Q^n\setminus \bigcup_{1\le u<v\le n} K_{u,v}.
\]
Then $m=n$ and there exist a permutation $\sigma$ on the set $\{1,2,\ldots,m\}$, a nonzero scalar $\lambda$, and scalars $b_i,\; i=1,2,\ldots,m,$ such that
\[
T(e_i)=b_i\mathbf 1_n-\lambda e_{\sigma(i)},\quad \text{for all }i=1,2,\ldots,m,
\]
where $e_i$ denotes the $i$-th standard basis vector and $\mathbf 1_n=\sum_{i=1}^n e_i=(1,\ldots,1)$.
\end{lemma}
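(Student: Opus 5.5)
The plan is to read the hypothesis row by row. Write $T(x)_u=\langle t_u,x\rangle$, where $t_u\in\mathbb Q^m$ is the $u$-th row of the matrix of $T$, for $1\le u\le n$. For $u<v$ consider the linear functional $\phi_{uv}(x)=\langle t_u-t_v,x\rangle$. The hypothesis says exactly that $\phi_{uv}^{-1}(0)\subseteq\bigcup_{p<q}H_{p,q}$. The first step is to show that $t_u\neq t_v$. Otherwise $T(\mathbb Q^m)\subseteq K_{u,v}$, which is impossible because the complement of the braid arrangement in $\mathbb Q^m$ is nonempty (for instance $(1,2,\dots,m)$ lies in it). So $\ker\phi_{uv}$ is a hyperplane. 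Since $\mathbb Q$ is infinite, a vector space cannot be a finite union of proper subspaces, so $\ker\phi_{uv}=H_{p,q}$ for some $p<q$. Hence
\[
t_u-t_v=c_{uv}(e_p-e_q),\qquad c_{uv}\in\mathbb Q\setminus\{0\}.
\]
That is, all pairwise differences of the rows are nonzero multiples of type-$A$ roots.

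The second step, which I expect to be the main obstacle, is to classify such configurations of rows using injectivity and $m\ge3$. Put $w_u=t_u-t_1$. All $w_u$ lie in the sum-zero hyperplane $W=\{\sum x_i=0\}$. Since $T$ is injective, the rows $t_1,\dots,t_n$ span $\mathbb Q^m$. Therefore the $w_u$ span a subspace of dimension at least $m-1$, so they span exactly $W$, and $t_1\notin W$. Because $\dim W=m-1\ge2$, there are two nonparallel vectors $w_u=c(e_a-e_b)$ and $w_v=d(e_{a'}-e_{b'})$.

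For $w_u-w_v$ to be a multiple of a root, the pairs $\{a,b\}$ and $\{a',b'\}$ must share exactly one index. After relabelling the signs, this forces $w_u=\lambda(e_k-e_i)$ and $w_v=\lambda(e_k-e_j)$ with a common index $k$ and a common scalar $\lambda$. Equivalently, $t_1,t_u,t_v$ all have the form $s-\lambda e_\tau$ with $s=t_1+\lambda e_k$, and $t_1=s-\lambda e_k$.

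Next I check that every further row $t_w$ fits the same pattern. Its differences with the three rows $s-\lambda e_k$, $s-\lambda e_i$, $s-\lambda e_j$ must all be root multiples. A short case check on supports gives $t_w=s-\lambda e_\tau$ for some $\tau$, or possibly $t_w=s$ itself. Thus every row is $s-\lambda e_{\tau(u)}$ or equals $s$, and the indices $\tau(u)$ are distinct because the rows are distinct.

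The third step excludes a row equal to $s$. Suppose $t_w=s$. Then for any other row, $\phi_{wu}(x)=\lambda x_{\tau(u)}$, whose kernel $\{x_{\tau(u)}=0\}$ is not a braid hyperplane, which is a contradiction. So $u\mapsto\tau(u)$ is an injective map $\{1,\dots,n\}\to\{1,\dots,m\}$, giving $n\le m$. On the other hand, $\operatorname{rank}T=m$ forces $n\ge m$. Hence $n=m$ and $\tau$ is a permutation.

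Finally, I read off the columns. We have $T(e_i)_u=s_i-\lambda\delta_{\tau(u),i}$, that is,
\[
T(e_i)=s_i\mathbf 1_n-\lambda e_{\tau^{-1}(i)}.
\]
This is the claimed form with $\sigma=\tau^{-1}$ and $b_i=s_i$, and $\lambda\neq0$ because the rows are distinct. The hypothesis $m\ge3$ enters only in the second step, where two nonparallel root directions are needed to force a common scalar $\lambda$. For $m=2$, rows of the form $s+c_u(e_1-e_2)$ with arbitrary distinct $c_u$ show that the conclusion genuinely fails.
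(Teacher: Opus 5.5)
The paper gives no proof of this statement. It is quoted from \cite{Man26} (Lemma~3.5 and Remark~3.7) and only applied, through Lemma~\ref{extension}, in Case~(i) of Lemma~\ref{no monomorphism}. So there is no in-paper argument to compare yours with, and your proposal has to stand on its own.

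It does stand: your argument is a correct, self-contained proof. The key steps all check out:
\begin{enumerate}
\item Reading the hypothesis as $\ker\langle t_u-t_v,\cdot\rangle\subseteq\bigcup_{p<q}H_{p,q}$ is exactly right.
\item Since $\mathbb Q$ is infinite, a hyperplane cannot be covered by finitely many proper subspaces of itself, so $t_u-t_v$ is a nonzero multiple of some $e_p-e_q$.
\item Injectivity together with $m\ge 3$ yields two nonparallel differences.
\item The support argument forces a common index $k$ and a common scalar $\lambda$.
\item The row count $n\le m$ against $\operatorname{rank}T=m$ gives $n=m$.
\end{enumerate}
Reading off the columns then gives exactly $T(e_i)=s_i\mathbf 1_n-\lambda e_{\tau^{-1}(i)}$.

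The one step you leave unwritten, the ``short case check on supports,'' is routine but worth recording. Put $y=t_w-s$. Requiring $y+\lambda e_k$ to be a nonzero root multiple leaves only two shapes.
\begin{enumerate}
\item $y=\alpha(e_p-e_q)-\lambda e_k$ with $k\notin\{p,q\}$. Two-element support of $y+\lambda e_i$ and of $y+\lambda e_j$ forces $\{i,j\}=\{p,q\}$. It then forces both $\alpha=-\lambda$ and $\alpha=\lambda$, which is impossible.
\item $y=\alpha e_p-(\alpha+\lambda)e_k$ with $p\neq k$ and $\alpha\neq0$. Either $\alpha=-\lambda$, giving $t_w=s-\lambda e_p$, or $y$ has support $\{p,k\}$. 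In the latter case $i=p=j$, which is impossible.
\end{enumerate}

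This also shows that the alternative $t_w=s$ never arises, so your third step is redundant, though harmless. All rows lie in the affine hyperplane $t_1+W$, because root multiples have coordinate sum zero, whereas $s-t_1=\lambda e_k\notin W$.

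Your closing example for $m=2$ is apt once you take at least three rows $s+c_u(e_1-e_2)$ with distinct $c_u$ and $s\notin\mathbb Q(e_1-e_2)$. Such a $T$ is injective and satisfies the hypothesis with $n\ge3>m$, so $m\ge3$ cannot be dropped. Injectivity is likewise essential: without it, collinear rows $t_1+c_u(e_1-e_2)$ satisfy the hypothesis for every $m$. Your proof uses injectivity precisely to exclude them.
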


\begin{lemma}[\cite{Man26}, Lemma 3.6]\label{extension}
Let $\mathbf 1_k=(1,\ldots,1)=\sum_{i=1}^k e_i\in\mathbb Q^k$, and let
\[
T:\mathbb Q^m/\mathbb Q\mathbf 1_m\to
\mathbb Q^n/\mathbb Q\mathbf 1_n
\]
be an injective linear map. Then there exists an injective linear map $\widetilde T:\mathbb Q^m\to\mathbb Q^n$ such that
$\widetilde T(\mathbb Q\mathbf 1_m)
\subseteq \mathbb Q\mathbf 1_n$ and the induced map on the quotients is $T$.
\end{lemma}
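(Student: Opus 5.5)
The plan is to build $\widetilde T$ by splitting $\mathbb Q^m$ into a complement of the line $\mathbb Q\mathbf 1_m$ plus that line. On the complement, $\widetilde T$ will be any linear lift of $T$; on the line, it will be the identification $\mathbf 1_m\mapsto\mathbf 1_n$. Injectivity then follows by first projecting to the quotient and then looking at the $\mathbf 1$-direction. Throughout, $m,n\ge 1$, so that $\mathbf 1_m$ and $\mathbf 1_n$ are nonzero.

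\emph{Step 1 (splitting).} Let $W=\operatorname{span}_{\mathbb Q}\{e_1,\ldots,e_{m-1}\}$. Then $\mathbb Q^m=W\oplus\mathbb Q\mathbf 1_m$, because the last coordinate of $\mathbf 1_m$ is nonzero. Hence the quotient map $\pi_m\colon\mathbb Q^m\to\mathbb Q^m/\mathbb Q\mathbf 1_m$ restricts to an isomorphism $W\cong \mathbb Q^m/\mathbb Q\mathbf 1_m$. When $m=1$ we have $W=0$ and the quotient is $0$.

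\emph{Step 2 (definition).} For each $1\le i\le m-1$, choose some $v_i\in\mathbb Q^n$ with $\pi_n(v_i)=T(\pi_m(e_i))$, where $\pi_n\colon\mathbb Q^n\to\mathbb Q^n/\mathbb Q\mathbf 1_n$ is the quotient map. Define $\widetilde T$ linearly by $\widetilde T(e_i)=v_i$ for $i<m$ and $\widetilde T(\mathbf 1_m)=\mathbf 1_n$. By Step 1 this determines a linear map on all of $\mathbb Q^m$. By construction $\widetilde T(\mathbb Q\mathbf 1_m)\subseteq\mathbb Q\mathbf 1_n$. Moreover $\pi_n\circ\widetilde T=T\circ\pi_m$: the two sides agree on each $e_i$ with $i<m$ by the choice of $v_i$, and both vanish on $\mathbf 1_m$. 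Thus $\widetilde T$ induces $T$ on the quotients.

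\emph{Step 3 (injectivity).} Write a vector in $\ker\widetilde T$ as $x=w+c\mathbf 1_m$ with $w\in W$ and $c\in\mathbb Q$.
\begin{itemize}
\item Applying $\pi_n$ to $\widetilde T(x)=0$ gives $T(\pi_m(x))=0$. Since $T$ is injective, $\pi_m(x)=0$, i.e.\ $\pi_m(w)=0$.
\item Since $\pi_m|_W$ is injective, $w=0$.
\item Then $0=\widetilde T(x)=c\mathbf 1_n$ forces $c=0$, because $\mathbf 1_n\neq0$.
\end{itemize}
Hence $\ker\widetilde T=0$.

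No step is a genuine obstacle; the only point needing care is that the lift must send $\mathbf 1_m$ to a \emph{nonzero} multiple of $\mathbf 1_n$. If instead we set $\widetilde T(\mathbf 1_m)=0$, which is also compatible with the quotients, injectivity would fail.
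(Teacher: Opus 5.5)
Your proof is correct. The splitting $\mathbb Q^m=W\oplus\mathbb Q\mathbf 1_m$, the lifts $v_i$, the normalization $\widetilde T(\mathbf 1_m)=\mathbf 1_n$, and the two-stage kernel computation all check out, including the degenerate case $m=1$; the kernel computation first uses injectivity of $T$ and of $\pi_m|_W$, then $\mathbf 1_n\neq 0$.

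The paper does not prove this lemma itself; it imports it from \cite{Man26}, so there is no in-paper argument to compare against, and your complement-and-lift construction is the natural proof.
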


Let us  fix some notations first.
\begin{notation}\label{notation of hyperplanes}
    For two sequences $\nu = (n_1, \ldots, n_\ell)$ and $\mu = (m_1, \ldots, m_\wp)$, let us first fix the notations of the rational cohomology rings by 
    $$\mathcal{R}=\oplus\mathcal R^i = H^*(\mathbb{C}G(\nu); \mathbb{Q}) \quad\text{and}\quad \mathcal{S} = \oplus \mathcal S^i=H^*(\mathbb{C}G(\mu); \mathbb{Q}),$$
    where $\mathcal R^i=H^i(\mathbb CG(\nu);\mathbb Q)$ and $\mathcal S^i=H^i(\mathbb CG(\mu);\mathbb Q)$. Denote $D = \dim_{\mathbb{C}} \mathbb{C}G(\nu) = \dim_{\mathbb{C}} \mathbb{C}G(\mu).$ 
Let $x_i = c_1(\gamma_i)$ and $y_j = c_1(\eta_j)$ be the first Chern classes of the canonical vector bundles over $\mathbb{C}G(\nu)$ and $\mathbb{C}G(\mu)$, respectively. The second cohomology $\mathbb Q$-vector spaces are given by
\[
\mathcal{R}^2 \;\cong\; \bigoplus_{i=1}^\ell \mathbb{Q}x_i \Big/ \mathbb{Q}\sum_{i=1}^\ell x_i \quad\text{and}\quad \mathcal{S}^2 \;\cong\; \bigoplus_{j=1}^\wp \mathbb{Q}y_j \Big/ \mathbb{Q}\sum_{j=1}^\wp y_j.
\]
By the Künneth formula, the second cohomology groups of the product spaces are
\[
\widetilde{\mathcal{R}^2} := H^2(\mathbb{S}^m \times \mathbb{C}G(\nu); \mathbb{Q}) \cong 
\begin{cases}
\mathcal{R}^2, & \text{if } m \neq 2, \\
\mathcal{R}^2 \oplus \mathbb{Q}u, & \text{if } m = 2,
\end{cases}
\]
and
\[
\widetilde{\mathcal{S}^2} := H^2(\mathbb{S}^m \times \mathbb{C}G(\mu); \mathbb{Q}) \cong 
\begin{cases}
\mathcal{S}^2, & \text{if } m \neq 2, \\
\mathcal{S}^2 \oplus \mathbb{Q}u, & \text{if } m = 2,
\end{cases}
\]
where $u \in H^2(\mathbb{S}^2; \mathbb{Q})$ denotes the fundamental cohomology class. For distinct indices $p < q$, we define the hyperplanes in the braid arrangements of the vector spaces $\mathcal{R}^2$ and $\mathcal{S}^2$, respectively, as:
\[
\mathcal{R}^2_{p,q} := \left\{ \sum_{i=1}^\ell a_i x_i \in \mathcal{R}^2 \;\mid\; a_p = a_q \right\} \quad\text{and}\quad \mathcal{S}^2_{p,q} := \left\{ \sum_{j=1}^\wp b_j y_j \in \mathcal{S}^2 \;\mid\; b_p = b_q \right\}.
\]
In addition, we define the corresponding hyperplanes in $\widetilde{\mathcal{R}^2}$ and $\widetilde{\mathcal{S}^2}$ by:
\[
\widetilde{\mathcal{R}}^2_{p,q} := \mathcal{R}^2_{p,q} \oplus \mathbb{Q}u \quad\text{and}\quad \widetilde{\mathcal{S}}^2_{p,q} := \mathcal{S}^2_{p,q} \oplus \mathbb{Q}u\quad \text{respectively}.
\]
Similar notation and constructions apply to the quaternionic case after doubling the degrees. In this setting, the focus will be on the fourth cohomology group rather than the second cohomology group.
\qed
\end{notation}

We need the following lemmas.

\begin{lemma}\label{tripple intersection} We adopt the notation introduced in Notation~\ref{notation of hyperplanes}.
Let $m=2$ and assume $\ell = \wp = n \ge 3$. Let 
\[
\mathcal{F}_{\mathcal{S}} := \{\widetilde{\mathcal{S}}^2_{p,q} \mid 1 \le p < q \le n\} \cup \{\mathcal{S}^2\}\text{ and }\mathcal{F}_{\mathcal{R}}: = \{\widetilde{\mathcal{R}}^2_{p,q} \mid 1 \le p < q \le n\} \cup \{\mathcal{R}^2\}.
\]
Suppose $\phi: \widetilde{\mathcal{S}^2} \to \widetilde{\mathcal{R}^2}$ is a linear isomorphism  such that $\phi (\mathcal F_{\mathcal S})=\mathcal F_{\mathcal R}$. Then $\phi(\mathcal{S}^2) = \mathcal{R}^2$.
\end{lemma}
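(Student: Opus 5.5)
The idea is to characterise $\mathcal{S}^2$ inside the family $\mathcal{F}_{\mathcal S}$ by an incidence property that only uses intersections of subspaces. Any linear isomorphism carrying $\mathcal{F}_{\mathcal S}$ onto $\mathcal{F}_{\mathcal R}$ preserves such a property, so it must send the distinguished member of $\mathcal{F}_{\mathcal S}$ to the distinguished member of $\mathcal{F}_{\mathcal R}$.

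\textbf{Step 1: the members and the bijection.} Since $\phi$ is a linear isomorphism, it is injective on subspaces. Hence $\phi$ induces a bijection $\mathcal{F}_{\mathcal S}\to\mathcal{F}_{\mathcal R}$, and it preserves intersections: $\phi(\bigcap_i W_i)=\bigcap_i\phi(W_i)$. All members of $\mathcal F_{\mathcal S}$ are hyperplanes in the $n$-dimensional space $\widetilde{\mathcal S^2}$, and they are pairwise distinct. Indeed, $\widetilde{\mathcal S}^2_{p,q}$ contains $u$ while $\mathcal S^2$ does not. Also, for $n\ge 3$ the hyperplanes $\mathcal S^2_{p,q}$ are distinct in $\mathcal S^2\cong\mathbb Q^n/\mathbb Q\mathbf 1_n$. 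The same holds for $\mathcal F_{\mathcal R}$.

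\textbf{Step 2: the property.} For $W\in\mathcal F_{\mathcal S}$, put $I(W):=\bigcap_{W'\in\mathcal F_{\mathcal S}\setminus\{W\}}W'$. The claim is that $I(W)\neq 0$ if and only if $W=\mathcal S^2$.

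\emph{The case $W=\mathcal S^2$.} Here $I(W)=\bigcap_{p<q}\widetilde{\mathcal S}^2_{p,q}=\bigl(\bigcap_{p<q}\mathcal S^2_{p,q}\bigr)\oplus\mathbb Qu$. A class $\sum b_jy_j$ lies in every $\mathcal S^2_{p,q}$ exactly when all $b_j$ are equal, which means the class is zero in $\mathcal S^2$. Hence $I(\mathcal S^2)=\mathbb Qu\neq 0$.

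\emph{The case $W=\widetilde{\mathcal S}^2_{a,b}$.} Here $I(W)=\mathcal S^2\cap\bigcap_{\{p,q\}\neq\{a,b\}}\widetilde{\mathcal S}^2_{p,q}=\bigcap_{\{p,q\}\neq\{a,b\}}\mathcal S^2_{p,q}$. Since $n\ge 3$, the complete graph on $\{1,\dots,n\}$ with the edge $\{a,b\}$ removed is still connected. So the equalities $b_p=b_q$ along its edges force all $b_j$ to be equal, and again the class is zero. Thus $I(W)=0$.

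This computation is the heart of the argument and the only place where the hypothesis $n\ge3$ is used. For $n=2$ the removed edge would disconnect the graph, and the argument would fail. I expect this connectivity check to be the main point to get right.

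\textbf{Step 3: conclusion.} The same computation applies verbatim to $\mathcal F_{\mathcal R}$, so $\mathcal R^2$ is the unique member $V\in\mathcal F_{\mathcal R}$ with $I(V)\neq0$. Since $\phi$ is a bijection of families commuting with intersections, $\phi(I(W))=I(\phi(W))$ for every $W\in\mathcal F_{\mathcal S}$. Taking $W=\mathcal S^2$ gives $I(\phi(\mathcal S^2))=\phi(\mathbb Qu)\neq 0$. This forces $\phi(\mathcal S^2)=\mathcal R^2$, as claimed.
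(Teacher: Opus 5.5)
Your proof is correct, but it uses a different intersection invariant from the paper's.

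\textbf{The paper's route.} It singles out $\mathcal S^2$ by a local property. A codimension-two subspace is called a \emph{triple intersection} if it equals all three pairwise intersections of three distinct members of the family. Every $\widetilde{\mathcal S}^2_{p,q}$ takes part in one, together with $\widetilde{\mathcal S}^2_{p,r}$ and $\widetilde{\mathcal S}^2_{q,r}$ for a third index $r$; this is where $n\ge 3$ enters. On the other hand, $\mathcal S^2$ takes part in none, because $\mathcal S^2\cap\widetilde{\mathcal S}^2_{p,q}=\mathcal S^2_{p,q}$ determines $\{p,q\}$.

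\textbf{Your route.} You use a global property instead: $\mathcal S^2$ is the only member whose omission leaves a nonzero common intersection. This holds because all the $\widetilde{\mathcal S}^2_{p,q}$ contain the line $\mathbb Q u$, while $K_n$ minus an edge is still connected for $n\ge 3$.

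\textbf{Comparison.} In matroid language, the paper characterizes $\mathcal S^2$ as the unique member lying in no three-element circuit, since every edge of $K_n$ lies in a triangle. You characterize it as the unique coloop, since no edge of $K_n$ is a bridge. Your version needs one computation and no codimension bookkeeping. It also gives a little more for free: $\phi(\mathbb Q u)=\phi(I(\mathcal S^2))=I(\mathcal R^2)=\mathbb Q u$. The paper's criterion is local, examining only three hyperplanes at a time rather than the whole family at once.
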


\begin{proof}
Let us call codimension-$2$ subspace $F \subset \widetilde{\mathcal{R}^2}$ a \emph{triple intersection} of $\mathcal{F}_{\mathcal{R}}$ if there exist three distinct hyperplanes $F_1, F_2, F_3 \in \mathcal{F}_{\mathcal{R}}$ such that $F = F_1 \cap F_2 = F_1 \cap F_3 = F_2 \cap F_3$. We say that a hyperplane $H \in \mathcal{F}_{\mathcal{R}}$ is \emph{a part of triple intersection} if $H = F_i$ for some triple intersection $F$. Analogous definitions apply to $\mathcal{F}_{\mathcal{S}}$ as well.

Since $\phi: \widetilde{\mathcal{S}^2} \to \widetilde{\mathcal{R}^2}$ is a linear isomorphism permuting the collections of hyperplanes, it preserves intersections. In fact, for a triple intersection $F = F_1 \cap F_2 = F_1 \cap F_3 = F_2 \cap F_3$, the image $\phi(F)$ is also a triple intersection, since $$\phi(F) = \phi(F_1) \cap \phi(F_2) = \phi(F_1) \cap \phi(F_3) = \phi(F_2) \cap \phi(F_3) $$ and $\phi(F_1),\phi(F_2), \phi(F_3)$ are distinct hyperplanes as $\phi$ is a linear isomorphism.
Thus, $H \in \mathcal{F}_{\mathcal{S}}$ is a part of triple intersection of $\mathcal{F}_{\mathcal{S}}$ if and only if $\phi(H) \in \mathcal{F}_{\mathcal{R}}$ is a part of triple intersection of $\mathcal{F}_{\mathcal{R}}$.

Now observe that every hyperplane $\widetilde{\mathcal{S}}^2_{p,q}$ is a part of a triple intersection of $\mathcal{F}_{\mathcal{S}}$.
Since $n \ge 3$, for any $1 \le p < q \le n$, we can choose a third index $r \notin \{p, q\}$. Consider the distinct hyperplanes $\widetilde{\mathcal{S}}^2_{p,q}, \widetilde{\mathcal{S}}^2_{p,r}, \widetilde{\mathcal{S}}^2_{q,r} \in \mathcal{F}_{\mathcal{S}}$. Their pairwise intersections are given by
$$\widetilde{\mathcal{S}}^2_{p,q} \cap \widetilde{\mathcal{S}}^2_{p,r}= \widetilde{\mathcal{S}}^2_{p,q} \cap \widetilde{\mathcal{S}}^2_{q,r}=\widetilde{\mathcal{S}}^2_{p,r} \cap \widetilde{\mathcal{S}}^2_{q,r} = \Big\{\sum_{j=1}^n b_j y_j + c u \in \widetilde{\mathcal{S}^2} \;\mid\; b_p = b_q = b_r\Big\}.$$
Hence, $\widetilde{\mathcal{S}}^2_{p,q}$ is a part of a triple intersection.

Now note that the hyperplane $\mathcal{S}^2$ is not a part of any triple intersection of $\mathcal{F}_{\mathcal{S}}$.
Suppose for contradiction that $\mathcal{S}^2$ is a part of a triple intersection. Then there exist \textit{distinct} hyperplanes $\widetilde{\mathcal{S}}^2_{p,q}, \widetilde{\mathcal{S}}^2_{r,s} \in \mathcal{F}_{\mathcal{S}}$ such that

\begin{equation}\label{intersections}
    \mathcal{S}^2 \cap \widetilde{\mathcal{S}}^2_{p,q} = \mathcal{S}^2 \cap \widetilde{\mathcal{S}}^2_{r,s} = \widetilde{\mathcal{S}}^2_{p,q} \cap \widetilde{\mathcal{S}}^2_{r,s}.
\end{equation}
Since $\mathcal{S}^2 \cap \widetilde{\mathcal{S}}^2_{p,q}=\mathcal S_{p,q}^2$ and $\mathcal{S}^2 \cap \widetilde{\mathcal{S}}^2_{r,s}=\mathcal S^2_{r,s}$, \eqref{intersections} implies that $\{p,q\}=\{r,s\}.$ 
 This contradicts the assumption that $\widetilde{\mathcal{S}}^2_{p,q}$ and $\widetilde{\mathcal{S}}^2_{r,s}$ are distinct.

Therefore, $\mathcal{S}^2$ and $\mathcal R^2$ are the unique hyperplanes in $\mathcal{F}_{\mathcal{S}}$ and $\mathcal{F}_{\mathcal{R}}$, respectively, that are not a part of  triple intersection.  Since $\phi$ preserves the triple intersections, we conclude that $\phi(\mathcal{S}^2) = \mathcal{R}^2$.
\end{proof}

\begin{lemma}\label{no monomorphism}
Let $\phi \colon H^*(\mathbb{S}^m \times \mathbb{F}G(\mu); \mathbb{Q}) \to H^*(\mathbb{S}^m \times \mathbb{F}G(\nu); \mathbb{Q})$ be a graded $\mathbb{Q}$-algebra homomorphism, where $ \mathbb{F}G(\nu),\mathbb{F}G(\mu)$ have the same dimension, $\nu$ is not a permutation of $\mu$, and $(|\nu|, |\mu|) \neq (2,2)$.  Then $\phi$ is not a monomorphism.
\end{lemma}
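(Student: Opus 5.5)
Suppose, for contradiction, that $\phi$ is a monomorphism; we will show that this forces $\nu$ to be a permutation of $\mu$. We treat $\mathbb F=\mathbb C$; the quaternionic case is identical after doubling degrees. Since the two products $\mathbb S^m\times\mathbb CG(\mu)$ and $\mathbb S^m\times\mathbb CG(\nu)$ are closed, oriented and of the same dimension, the top-degree groups are both $\mathbb Q$. Poincaré duality then shows that an injective graded map between the rational cohomology algebras is an isomorphism in every degree. Indeed, for each $k$, injectivity gives $\dim \mathcal S^k\le \dim\mathcal R^k$ (with the $u$-summands included), and this inequality holds in every degree. Comparing the Poincaré polynomials of $\mathbb S^m\times \mathbb CG(\mu)$ and $\mathbb S^m\times\mathbb CG(\nu)$, whose total ranks are the multinomial numbers $n!/\prod n_i!$, we want equality. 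If that fails, we fall back on the injective map $\phi\colon\widetilde{\mathcal S^2}\to\widetilde{\mathcal R^2}$. This already gives $\wp\le\ell$, and we then argue with heights.

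The central step is that $\phi$ preserves heights of degree-two classes. The map $\phi$ is injective on all of $H^*$, so $y^k\neq0$ if and only if $\phi(y)^k\neq0$, i.e.\ $h(\phi(y))=h(y)$. Consider first $m\ne2$. By Corollary~\ref{kahler class descripition}, $y$ is Kähler exactly when $h(y)=D$, so $\phi$ maps the complement of the braid arrangement in $\mathcal S^2$ into the complement of the arrangement in $\mathcal R^2$. Lemma~\ref{extension} lifts $\phi$ to an injective map $\widetilde T\colon\mathbb Q^\wp\to\mathbb Q^\ell$ with $\widetilde T(\mathbb Q\mathbf 1_\wp)\subseteq\mathbb Q\mathbf 1_\ell$. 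Whether a vector lies in a braid hyperplane depends only on its class modulo $\mathbf 1$, so $\widetilde T$ still satisfies the hypothesis of Lemma~\ref{T(e_i)}. When $\wp\ge3$, that lemma gives $\wp=\ell$ and $\widetilde T(e_i)=b_i\mathbf 1-\lambda e_{\sigma(i)}$.

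With this normal form, heights identify the partitions. Take $y=y_i$; its height is $m_i(n-m_i)$ by Theorem~\ref{heights of deg 2 elements}. Its image is $-\lambda x_{\sigma(i)}$ modulo $\mathbf 1$, which has height $n_{\sigma(i)}(n-n_{\sigma(i)})$. The equality $m_i(n-m_i)=n_{\sigma(i)}(n-n_{\sigma(i)})$ only gives $m_i\in\{n_{\sigma(i)},n-n_{\sigma(i)}\}$. To pin down $m_i$, use $y=y_i+y_j$ instead. Its height determines $m_i+m_j$ in the same ambiguous way, and combined with the dimension equality $\sum_{i<j}m_im_j=\sum_{i<j}n_in_j$ and with $\sum m_i=\sum n_i$ (which follows from $\dim\mathcal S^2=\dim\mathcal R^2$ not being enough), this should force $m_i=n_{\sigma(i)}$. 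This contradicts the assumption that $\nu$ is not a permutation of $\mu$.

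Several cases remain. If $\wp\le2<\ell$, the situation is the mixed Grassmannian case. Here we compare a top-height class: a Kähler class $y$ has $y^D\neq0$, but the preimage of a Kähler class of $\mathcal R^2$ must be rank-counted using Theorem~\ref{heights of deg 2 elements}. In this case I would use that $\mathcal S$ is generated by the Chern classes of one bundle and compare ranks in degree $4$ to get a contradiction. For $m=2$, first apply Lemma~\ref{tripple intersection}. By \eqref{heights for deg two for products}, heights of height $D$ versus $D+1$ determine the arrangements $\mathcal F_{\mathcal S}$ and $\mathcal F_{\mathcal R}$, so $\phi$ maps $\mathcal S^2$ onto $\mathcal R^2$, reducing to the previous case. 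The main obstacle is the step from $\widetilde T(e_i)=b_i\mathbf 1-\lambda e_{\sigma(i)}$ to $m_i=n_{\sigma(i)}$, since heights are symmetric under $m\leftrightarrow n-m$. Using the sums $y_i+y_j$ over several pairs, together with $\ell\ge3$, should break this symmetry.
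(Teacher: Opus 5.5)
The decisive gap is the case you postpone, $|\mu|\le 2<|\nu|$. No rank comparison in degree $4$, or any other argument, can close it, because the statement fails there. Take $\mu=(1,3)$ and $\nu=(1,1,1)$, so $\mathbb CG(\mu)=\mathbb CP^3$ and $\mathbb CG(\nu)$ is the full flag manifold of $\mathbb C^3$, both of complex dimension $3$. The class $x=x_1-x_3$ has distinct coefficients, so $h(x)=3$ by Theorem~\ref{heights of deg 2 elements}. Hence $y\mapsto x$ defines an injective graded algebra map $\mathbb Q[y]/(y^4)\to H^*(\mathbb CG(1,1,1);\mathbb Q)$. Tensoring with the identity of $H^*(\mathbb S^m;\mathbb Q)$ gives a graded monomorphism $\phi$ that satisfies every hypothesis, with $(|\nu|,|\mu|)=(3,2)$. (It is even realized by a map: by cellular approximation, a map $\mathbb CG(1,1,1)\to\mathbb CP^\infty$ classifying $x$ lands in $\mathbb CP^3$, and $\mathrm{id}\times$ this map has nonzero degree.) Here the degree-$4$ ranks of the fibres are $1$ and $2$, so your proposed comparison yields nothing. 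The same construction works for $\mu=(1,\dim\mathbb CG(\nu))$ and any $\nu$ with $|\nu|\ge3$.

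The paper's proof has the same hole. Lemma~\ref{T(e_i)} and the three-index step need $\wp\ge3$, whereas $\wp\le\ell$ and $(\ell,\wp)\ne(2,2)$ only give $\ell\ge3$. Its ``taking complements'' step for $m=2$ passes from $\phi(\widetilde{\mathcal S^2}\setminus X)\subseteq\widetilde{\mathcal R^2}\setminus Y$ to $Y\subseteq\phi(X)$, which needs $\phi$ to be onto in degree $2$; in the example, $\phi(\widetilde B\cup\mathcal S^2)=\mathbb Qu\cup\mathbb Qx\not\supseteq\mathcal R^2$. What can be proved is the lemma with the extra hypothesis $|\mu|\ge3$; the case $|\mu|>|\nu|$ is then trivial by comparing $\dim H^2$.

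Within that range your write-up still has repairable gaps.

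\textbf{Opening claim.} You assert that an injective graded map between rational cohomology algebras of equidimensional closed oriented manifolds is an isomorphism in each degree. This is false: the example above is not onto in degree $2$.

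\textbf{The ``main obstacle''.} It is not one. Your normal form preserves the heights of $y_i$, $y_j$ and $y_i+y_j$, and with $M=\sum_k m_k$,
\[
h(y_i)+h(y_j)-h(y_i+y_j)=m_i(M-m_i)+m_j(M-m_j)-(m_i+m_j)(M-m_i-m_j)=2m_im_j .
\]
The same holds on the $\nu$ side, so $m_im_j=n_{\sigma(i)}n_{\sigma(j)}$. You never need $\sum m_i=\sum n_i$, which you do not establish anyway. The paper gets the same relation from a generic point of the single hyperplane $\mathcal S^2_{i,j}$, which has height $D-m_im_j$. Three indices then give $m_i^2=n_{\sigma(i)}^2$.

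\textbf{The case $m=2$.} You cannot apply Lemma~\ref{tripple intersection} before knowing that $\wp=\ell$ and that $\phi$ carries $\mathcal F_{\mathcal S}$ bijectively onto $\mathcal F_{\mathcal R}$. Height preservation only gives $\phi^{-1}\bigl(\bigcup\mathcal F_{\mathcal R}\bigr)=\bigcup\mathcal F_{\mathcal S}$. A clean repair runs as follows.
\begin{enumerate}
\item For $\ell\ge3$, no nonzero class of $\mathcal R^2$ squares to zero, so $\phi(u)^2=0$ forces $\phi(u)\in\mathbb Q^\times u$.
\item Compose $\phi$ with restriction to a slice $\{\mathrm{pt}\}\times\mathbb CG(\nu)$. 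This gives an algebra map $H^*(\mathbb CG(\mu);\mathbb Q)\to H^*(\mathbb CG(\nu);\mathbb Q)$.
\item That map is injective: if $\phi(w)\in u\,H^*(\mathbb CG(\nu);\mathbb Q)$, then $\phi(uw)=0$, so $w=0$.
\item The $m\ne2$ argument now applies to this map.
\end{enumerate}
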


\begin{proof}
First we prove for $\mathbb F=\mathbb C.$

Let $\nu = (n_1, \ldots, n_\ell)$ and $\mu = (m_1, \ldots, m_\wp)$. We follow the notations and observations from Notation~\ref{notation of hyperplanes}.
Note that $\dim \mathcal{R}^2 = \ell - 1$ and $\dim \mathcal{S}^2 = \wp - 1$. The linear map $\phi \colon \widetilde{\mathcal{S}^2} \to \widetilde{\mathcal{R}^2}$ cannot be a monomorphism if $\wp > \ell$. Thus, a monomorphism $\phi$ can only exist if $\wp \le \ell$, which we assume for the rest of the proof. 

We proceed by analyzing two cases: (i) $m \neq 2$, and (ii) $m = 2$.

\medskip
\noindent\textbf{Case (i): $m \neq 2$.} 
This case is analogous to the study of the nonexistence of monomorphisms between rational cohomology algebras of same-dimensional complex partial flag manifolds (see \cite{Man26}). For completeness, we include a self-contained argument.

By Corollary~\ref{kahler class descripition}, the subsets of elements in $\mathcal{S}^2$ and $\mathcal{R}^2$ of heights $<D$ are
\[
B := \bigcup_{1 \le p < q \le \wp} \mathcal{S}^2_{p,q} \quad\text{and}\quad A := \bigcup_{1 \le p < q \le \ell} \mathcal{R}^2_{p,q}\quad \text{respectively}.
\]
Thus all the elements of $\mathcal R^2\setminus A$ and $\mathcal S^2\setminus B$ have height $D.$
Because any graded algebra monomorphism preserves heights, assuming $\phi$ is a monomorphism forces $\phi(\mathcal{S}^2 \setminus B) \subseteq \mathcal{R}^2 \setminus A$. Applying Lemma~\ref{T(e_i)} and Lemma~\ref{extension}, it follows that $\wp = \ell$, and there exist a permutation $\sigma$ of $\{1, 2, \ldots, \wp\}$ and a nonzero scalar $\lambda \in \mathbb{Q} \setminus \{0\}$ such that
\begin{equation}\label{phi permutes}
\phi(y_i) = \lambda x_{\sigma(i)} \quad \text{for all } i = 1, 2, \ldots, \wp.
\end{equation}
By Theorem~\ref{heights of deg 2 elements}, elements in $\mathcal{S}^2_{i,j} \setminus \bigcup_{(p,q) \neq (i,j)} \mathcal{S}^2_{p,q}$ have height $D- m_i m_j$, while those in $\mathcal{R}^2_{i,j} \setminus \bigcup_{(p,q) \neq (i,j)} \mathcal{R}^2_{p,q}$ have height $D - n_i n_j$. Since $\phi$ maps $\mathcal{S}^2_{i,j}$ onto $\mathcal{R}^2_{\sigma(i),\sigma(j)}$ and preserves heights, \eqref{phi permutes} yields $D - m_i m_j = D - n_{\sigma(i)} n_{\sigma(j)}$, implying
\[
m_i m_j = n_{\sigma(i)} n_{\sigma(j)} \quad \text{for all } 1 \le i < j \le \wp.
\]

To show that $\mu$ and $\nu$ agree up to permutation, select three distinct indices $i, j, k \in \{1, 2, \ldots, \wp\}$ (which exist since $\wp \ge 3$). The system of pairwise relations gives
\[
m_i m_j = n_{\sigma(i)} n_{\sigma(j)}, \quad m_j m_k = n_{\sigma(j)} n_{\sigma(k)}, \quad \text{and} \quad m_i m_k = n_{\sigma(i)} n_{\sigma(k)}.
\]
Multiplying the first and third equations yields $m_i^2 (m_j m_k) = n_{\sigma(i)}^2 (n_{\sigma(j)} n_{\sigma(k)})$. Substituting the second relation into this product yields
\[
m_i^2 (n_{\sigma(j)} n_{\sigma(k)}) = n_{\sigma(i)}^2 (n_{\sigma(j)} n_{\sigma(k)}).
\]
This gives gives $m_i^2 = n_{\sigma(i)}^2$, which forces $m_i = n_{\sigma(i)}$. As $i$ was arbitrary, $m_i = n_{\sigma(i)}$ holds for every $1 \le i \le \wp$, contradicting the assumption that  that $\nu$ is not a permutation of $\mu$. Thus, $\phi$ cannot be a monomorphism.

\medskip
\noindent\textbf{Case (ii): $m = 2$.} 
By Lemma~\ref{heights of deg 2 elements} and the observation \eqref{heights for deg two for products}, the sets of cohomology classes not attaining maximal height $D+1$ in $\widetilde{\mathcal{S}^2}$ and $\widetilde{\mathcal{R}^2}$ are given respectively by
\[
\widetilde{B} \cup \mathcal{S}^2 \quad \text{and}\quad \widetilde{A} \cup \mathcal{R}^2, \quad \text{where } \widetilde{B} := \bigcup_{1 \le p < q \le \wp} \widetilde{\mathcal{S}^2_{p,q}} \;\text{ and }\; \widetilde{A} := \bigcup_{1 \le p < q \le \ell} \widetilde{\mathcal{R}^2_{p,q}}.
\]
If $\phi$ is a monomorphism, it preserves the set of cohomology classes of maximal heights:
\[
\phi\big(\widetilde{\mathcal{S}^2} \setminus (\widetilde{B} \cup \mathcal{S}^2)\big) \subseteq \widetilde{\mathcal{R}^2} \setminus (\widetilde{A} \cup \mathcal{R}^2).
\]
Taking complements gives the inclusion
\begin{equation}\label{hyperplane containment}
\phi(\widetilde{B} \cup \mathcal{S}^2) \supseteq \widetilde{A} \cup \mathcal{R}^2.
\end{equation}

Note that $\mathcal{S}^2$ and $\mathcal{R}^2$ are hyperplanes in $\widetilde{\mathcal{S}^2}$ and $\widetilde{\mathcal{R}^2}$, respectively. Since $\phi$ is a monomorphism, the image $\phi(\widetilde{B} \cup \mathcal{S}^2)$ is a union of hyperplanes in $\phi(\widetilde{\mathcal{S}^2})$. Comparing dimensions of the hyperplanes in $\widetilde{\mathcal S^2}$ and $\widetilde{\mathcal R^2}$, the inclusion \eqref{hyperplane containment} forces $\wp \ge \ell$, which, combined with $\wp \le \ell$, implies $\wp = \ell$. Since $(|\nu|, |\mu|)\neq (2,2)$, we have $\wp=\ell\ge 3.$

Consequently, both unions $\widetilde{B} \cup \mathcal{S}^2$ and $\widetilde{A} \cup \mathcal{R}^2$ consist of the same number of hyperplanes, namely $\binom{\wp}{2} + 1$. It follows that $\phi(\widetilde{B}\cup \mathcal S^2)=\widetilde{A}\cup \mathcal R^2.$ Thus, the linear monomorphism $\phi$ induces a bijection between these sets of hyperplanes:
\begin{equation}\label{bijection}
\{\widetilde{\mathcal{S}_{p,q}^2} : 1 \le p < q \le \wp\} \cup \{\mathcal{S}^2\} \; \longleftrightarrow \; \{\widetilde{\mathcal{R}_{p,q}^2} : 1 \le p < q \le \wp\} \cup \{\mathcal{R}^2\}.
\end{equation}
Applying the Lemma~\ref{tripple intersection}, we can conclude that $\phi(\mathcal S^2)=\mathcal R^2.$
The situation then reduces to Case (i). Following the same reasoning, it follows that $\phi$ cannot be a monomorphism.

The proof for $\mathbb F=\mathbb H$ is similar, with the proof divided into two cases according as $m\neq 4$ or $m=4$. Thus we omit the details.
\end{proof}

\begin{theorem}\label{deg zero, one not grassmannian}
    Let $\nu$ and $\mu$ be two finite sequences of natural numbers such that $\nu$ is not a permutation $\mu$, one of them has length at least $3$, and $\dim \mathbb CG(\nu)=\dim \mathbb CG(\mu).$ Then any continuous maps $f: \mathbb S^m\times \mathbb FG(\nu)\longrightarrow  \mathbb S^m\times \mathbb FG(\mu)$ has degree zero. Consequently, when $P(m,\nu)$ and $P(m,\mu)$ are orientable, for any continuous map $g:P(m,\nu)\longrightarrow P(m,\mu)$, the degree $\deg (g)$ is zero.
\end{theorem}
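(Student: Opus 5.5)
The plan mirrors the proof of Proposition~\ref{degree zero, m r are distinct}: first treat maps between the product spaces, then descend to the generalized Dold manifolds through covering lifts. Let $f\colon \mathbb S^m\times\mathbb FG(\nu)\to\mathbb S^m\times\mathbb FG(\mu)$ be continuous. The hypothesis $\dim\mathbb FG(\nu)=\dim\mathbb FG(\mu)$ makes the two products closed, connected, oriented manifolds of the same dimension. By Lemma~\ref{deg zero iff not injective}, $\deg(f)\neq 0$ if and only if the graded algebra homomorphism
\[
f^*\colon H^*(\mathbb S^m\times\mathbb FG(\mu);\mathbb Q)\to H^*(\mathbb S^m\times\mathbb FG(\nu);\mathbb Q)
\]
is injective.

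To rule out injectivity, apply Lemma~\ref{no monomorphism} with $\phi=f^*$. Its hypotheses are met: the flag manifolds have equal dimension, $\nu$ is not a permutation of $\mu$, and one of $|\nu|,|\mu|$ is at least $3$, so $(|\nu|,|\mu|)\neq(2,2)$. The lemma is stated with domain built on $\mu$ and target on $\nu$, which is exactly the direction of $f^*$. Hence $f^*$ is not a monomorphism, and therefore $\deg(f)=0$. This argument covers both $\mathbb F=\mathbb C$ and $\mathbb F=\mathbb H$, since the lemma is stated for both.

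For the Dold manifold statement, let $g\colon P(m,\nu)\to P(m,\mu)$ with both spaces orientable and oriented. By Remark~\ref{covering lift}, $g$ has a lift $\widetilde g\colon\mathbb S^m\times\mathbb CG(\nu)\to\mathbb S^m\times\mathbb CG(\mu)$ with $p_{m,\mu}\circ\widetilde g=g\circ p_{m,\nu}$. Orient the covers compatibly, so that the double coverings $p_{m,\nu}$ and $p_{m,\mu}$ each have degree $2$. Multiplicativity of degree then gives
\[
2\deg(\widetilde g)=\deg(p_{m,\mu}\circ\widetilde g)=\deg(g\circ p_{m,\nu})=2\deg(g).
\]
So $\deg(g)=\deg(\widetilde g)$, and this vanishes by the first part.

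The main technical content is entirely in Lemma~\ref{no monomorphism}, which is already established, so I expect no serious obstacle. The only point needing care is the claim that the coverings have degree $\pm2$. This holds because, when $P(m,\nu)$ is orientable, the deck involution $\theta$ preserves orientation of the cover: by \eqref{orientability of GDM} the top class $u\omega$ is $\theta^*$-invariant, so the pulled-back orientation is consistent. Even if signs were chosen differently, one would get $\deg(g)=\pm\deg(\widetilde g)$, which suffices to conclude $\deg(g)=0$.
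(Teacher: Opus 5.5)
Your reduction is exactly the paper's: lift to the double covers, apply Lemma~\ref{deg zero iff not injective}, then Lemma~\ref{no monomorphism}. Your handling of the covering degrees is fine. The gap is the step ``its hypotheses are met, hence $f^*$ is not a monomorphism.'' Lemma~\ref{no monomorphism} is false in the subcase $|\mu|=2$, $|\nu|\ge 3$, and so is the statement there.

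\textbf{Counterexample.} Take $\nu=(1,1,1)$ and $\mu=(1,3)$, so $\mathbb CG(\mu)=\mathbb CP^3$ and both fibres have complex dimension $3$.
\begin{itemize}
\item The class $x=x_1-x_3\in H^2(\mathbb CG(1,1,1);\mathbb Q)$ has $x^3\neq 0$ by Corollary~\ref{kahler class descripition}, and $x^4=0$ for degree reasons.
\item So $y\mapsto x$, $u\mapsto u$ defines a graded algebra homomorphism $H^*(\mathbb S^m\times\mathbb CP^3;\mathbb Q)=\mathbb Q[y]/(y^4)\otimes H^*(\mathbb S^m;\mathbb Q)\to H^*(\mathbb S^m\times\mathbb CG(1,1,1);\mathbb Q)$. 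It is injective, since $1,x,x^2,x^3$ are nonzero and lie in distinct degrees.
\item It is realized by an actual map. Embed $\mathbb CG(1,1,1)$ as the incidence variety in $\mathbb P^2\times\mathbb P^2$ and Segre-embed it in $\mathbb P^8$. Project linearly from a $\mathbb P^4$ defined over $\mathbb Q$ and disjoint from it; such centres exist by a dimension count and density of rational points.
\item This gives a finite morphism $\pi\colon\mathbb CG(1,1,1)\to\mathbb CP^3$ of degree $6$ that commutes with complex conjugation. Hence $\mathrm{id}\times\pi$ has degree $6$ and is $\theta$-equivariant.
\item It therefore descends to a map $P(m,(1,1,1))\to P(m,(1,3))$ of degree $\pm 6$, and both spaces are orientable for $m$ even.
\end{itemize}
So neither assertion can be proved in this case. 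The paper's one-line proof has the same gap.

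\textbf{Where Lemma~\ref{no monomorphism} breaks.} In Case (i), Lemma~\ref{T(e_i)} is applied with source $\mathcal S^2$, but that lemma needs $\wp=|\mu|\ge 3$. When $\wp=2$, $\mathcal S^2$ is a line spanned by a K\"ahler class, which may be sent to any K\"ahler class of $\mathbb CG(\nu)$. In Case (ii), the inclusion \eqref{hyperplane containment} is obtained by taking complements. That is valid only if $\phi$ is onto in degree $2$, which is precisely what is in question when $\wp<\ell$.

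\textbf{What your argument does prove.} It establishes the statement under the extra hypothesis $|\mu|\ge 3$, i.e.\ the target fibre is not a Grassmannian:
\begin{itemize}
\item If $|\nu|<|\mu|$, then $f^*$ cannot be injective on $H^2$, by dimension.
\item If $|\nu|\ge|\mu|\ge 3$ and $m\neq 2$, Case (i) goes through as written.
\item If $m=2$, replace the complement argument as follows. Since $f^*(u)^2=0$, write $f^*(u)=au+x$; then $x^2=0$. By Theorem~\ref{heights of deg 2 elements}, no nonzero class of $H^2(\mathbb CG(\nu);\mathbb Q)$ squares to zero when $|\nu|\ge 3$, so $f^*(u)\in\mathbb Q u$. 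Then $f^*$ induces an injective map $\mathcal S^2\to\mathcal R^2$. This map is height-preserving: compare the heights of $y+tu$ and of its image for generic $t$, using \eqref{heights for deg two for products}. Case (i) then applies. The same works with $m=4$ for $\mathbb F=\mathbb H$.
\end{itemize}
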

\begin{proof}
One completes the proof similarly as the proof of Proposition \ref{degree zero, m r are distinct}, using Remark~\ref{covering lift}, Lemma~\ref{deg zero iff not injective}, and Lemma~\ref{no monomorphism}.
\end{proof}

\subsection{Degrees of maps from $P(m,\mu)$ to $P(m, \nu)$, when $(|\nu|, |\mu|)= (2,2)$}
Now we proceed to study the existence of nonzero degree maps between generalized Dold manifolds fibred by distinct complex Grassmannians over the same real projective space.  In this context, the following results from the literature will be useful.

\begin{theorem}[\cite{SS09}, Theorem 1.2]\label{sarkar-sankaran degree}
Let $\mathbb F \in \{\mathbb C,\mathbb H\}$ and let $f \colon \mathbb FG(n_1,n_2) \to \mathbb FG(m_1,m_2)$ be a continuous map between two Grassmannians. Assume that 
\begin{enumerate}
    \item $n_1n_2=m_1m_2=$ the $\mathbb F$-dimension of the Grassmannians,
    \item $2 \le \min\{m_1,m_2\} < \min\{n_1,n_2\}$ and
    \item $(n_1^2-1)(n_2^2-1)(m_1^2-1)(m_2^2-1)$ is not a perfect square.
\end{enumerate}
Then the Brouwer degree of $f$ is zero.
\end{theorem}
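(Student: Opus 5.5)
The plan is to argue by contradiction through rational cohomology. Suppose $\deg(f)=d\neq 0$. By Lemma~\ref{deg zero iff not injective}, $f^*\colon H^*(\mathbb FG(m_1,m_2);\mathbb Q)\to H^*(\mathbb FG(n_1,n_2);\mathbb Q)$ is then a graded monomorphism. I treat $\mathbb F=\mathbb C$; the quaternionic case is identical after doubling degrees. Write $D=n_1n_2=m_1m_2$. Let $x$ generate $H^2$ of the source and $y$ generate $H^2$ of the target; these groups are one-dimensional because both flag manifolds are Grassmannians. Then $f^*(y)=\lambda x$ with $\lambda\neq 0$, by injectivity. Since $f^*$ is multiplicative and $f^*(\mu)=d\,\mu$ on fundamental classes, comparing $\int y^D$ with $\int x^D$ relates $d$ to $\lambda^D$.

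The heart of the argument is degree $4$. Hypothesis (2) gives $\min\{m_i\}\ge 2$ and $\min\{n_i\}\ge 3$, so both $H^4$ groups are two-dimensional, spanned by $\{x^2,c_2\}$ and by $\{y^2,c_2'\}$ respectively. A dimension count then shows that $f^*$ restricts to an isomorphism $H^4(\mathbb CG(m_1,m_2))\to H^4(\mathbb CG(n_1,n_2))$. On each side I introduce the symmetric bilinear form
\[
Q(a,b)=\int a\,b\,h^{D-4},
\]
where $h$ is $y$ on the target and $x$ on the source. Assume $D\ge 4$; this is automatic from the hypotheses. By Hard Lefschetz, the two forms $Q_S$ (target) and $Q_R$ (source) are nondegenerate on $H^4$. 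Naturality gives
\[
Q_R\bigl(f^*a,f^*b\bigr)=d\,\lambda^{D-4}\,Q_S(a,b).
\]
Thus $f^*|_{H^4}$ is a rational similarity of binary quadratic forms. Taking determinants in chosen bases, with $P$ the matrix of $f^*|_{H^4}$, yields
\[
(\det P)^2\det Q_R=(d\lambda^{D-4})^2\det Q_S.
\]
Consequently $\det Q_R\cdot\det Q_S$ must be a square in $\mathbb Q^\times$.

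It remains to compute these determinants. The entries are the intersection numbers $\int x^D$, $\int x^{D-2}c_2$ and $\int x^{D-4}c_2^2$. These can be evaluated from the degree of the Plücker embedding, $\int x^D=D!\prod_{i<n_1}\frac{i!}{(n_2+i)!}$, together with the Pieri rule. Equivalently, they count standard Young tableaux of rectangular shape with one or two prescribed initial boxes. I expect each entry to be $\int x^D$ times an explicit rational function of $n_1,n_2$. The determinant should then come out as $(\int x^D)^2$ times a rational square times $(n_1^2-1)(n_2^2-1)$, up to square factors. The analogous formula holds for $\mathbb CG(m_1,m_2)$.

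The main obstacle is this Schubert-calculus computation, and specifically isolating the non-square part $(n_1^2-1)(n_2^2-1)$ cleanly. I would first carry it out with the hook length formula for the three tableau counts. Once the formula is in hand, the squareness condition forces $(n_1^2-1)(n_2^2-1)(m_1^2-1)(m_2^2-1)$ to be a perfect square, contradicting hypothesis (3). Hence $d=0$.
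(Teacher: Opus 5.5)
The paper gives no proof of this statement. It is quoted from \cite{SS09} and used only as a black box in Case (2) of Lemma~\ref{no mono for Grassmannians}, so your argument has to stand on its own. It does, once the deferred computation is carried out. Your invariant is the square class of $\det Q$ for $Q(a,b)=\int ab\,h^{D-4}$ on $H^4$. This is the basis-free form of a direct computation: write $f^*(c_2)=\alpha x^2+\beta c_2$ and eliminate $\alpha$ from the three top-degree identities; then $\beta^2$ comes out as a rational square times $\det Q_R\det Q_S$.

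Two small corrections:
\begin{enumerate}
\item Since $f^*(y^{D-4})=\lambda^{D-4}x^{D-4}$, the similarity factor is $d\lambda^{-(D-4)}$, not $d\lambda^{D-4}$. This is harmless for the squareness argument.
\item You never use the strict inequality in hypothesis (2). You only need $\min\{n_i\},\min\{m_i\}\ge 2$, so that both $H^4$ are two-dimensional.
\end{enumerate}

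The point that genuinely needs fixing is your predicted shape of the determinant. Modulo squares, $\det Q$ is \emph{not} $(n_1^2-1)(n_2^2-1)$. For $\mathbb CG(3,3)$ the entries $\int x^9,\int x^7c_2,\int x^5c_2^2$ are $42,21,11$ up to a common sign. So $\det Q=42\cdot 11-21^2=21$, while $(3^2-1)^2$ is a square.

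The hook-length computation, with $x=c_1(\gamma_1)$ and $c_2=c_2(\gamma_1)$, actually gives
\[
\int x^{D-2}c_2=\frac{(n_1-1)(n_2+1)}{2(D-1)}\int x^D,\qquad
\int x^{D-4}c_2^2=\frac{(n_1-1)(n_2+1)\bigl[(D-4)(D-1+n_1-n_2)+4\bigr]}{4(D-1)(D-2)(D-3)}\int x^D .
\]
Using $(n_1-1)(n_2+1)=D-1+n_1-n_2$ and $D-1-n_1+n_2=(n_1+1)(n_2-1)$, these give
\[
\det Q=\frac{\bigl(\int x^D\bigr)^2\,(n_1^2-1)(n_2^2-1)}{2(D-1)^2(D-2)(D-3)} .
\]
The extra non-square factor $2(D-2)(D-3)$ depends only on $D$, which source and target share. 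It therefore cancels in $\det Q_R\det Q_S$. Your similarity argument then forces $(n_1^2-1)(n_2^2-1)(m_1^2-1)(m_2^2-1)$ to be a rational square, hence an integer square, contradicting (3).

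So the plan is correct and complete once this formula is in hand. You just need to track the $D$-dependent factor rather than expect each discriminant separately to be $(n_1^2-1)(n_2^2-1)$ up to squares. The quaternionic case goes through as you say, since everything is a computation in the degree-doubled isomorphic ring.
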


\begin{theorem}[\cite{GH81}, Theorem 1.2]\label{glover-homer realization}
Let $X$ and $Y$ be  formal, nilpotent, finite CW complexes. For any graded algebra homomorphism $\phi \colon H^*(X;\mathbb Q) \to H^*(Y;\mathbb Q)$, there exists an automorphism $f^*$ of $H^*(Y;\mathbb Q)$ induced by a continuous self-map $f \colon Y \to Y$ such that $f^* \circ \phi = g^*$ for some continuous map $g \colon X \to Y$.
\end{theorem}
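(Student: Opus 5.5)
The plan is to prove this by rational homotopy theory: first realize $\phi$ rationally, then clear denominators using self-maps of $Y$. Since $X$ and $Y$ are nilpotent and of finite type, they have rationalizations $X\to X_{\mathbb Q}$ and $Y\to Y_{\mathbb Q}$, and Sullivan minimal models $\mathcal M_X$ and $\mathcal M_Y$. Formality provides quasi-isomorphisms $\mathcal M_X\to (H^*(X;\mathbb Q),0)$ and $\mathcal M_Y\to (H^*(Y;\mathbb Q),0)$. The map $\phi$ is then a map of commutative differential graded algebras with zero differential. By the lifting property of minimal Sullivan algebras it lifts, up to homotopy, to a map $\mathcal M_Y\to\mathcal M_X$ inducing $\phi$ on cohomology. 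Applying spatial realization gives a map $\hat g\colon X_{\mathbb Q}\to Y_{\mathbb Q}$ with $\hat g^*=\phi$. Composing with the localization $X\to X_{\mathbb Q}$ yields $g_0\colon X\to Y_{\mathbb Q}$ whose induced map on rational cohomology is $\phi$.

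The second step uses the grading automorphisms supplied by formality. For $t\in\mathbb Q^\times$, let $\psi_t$ be the automorphism of $H^*(Y;\mathbb Q)$ acting by $t^k$ on $H^k$. Formality lets us lift $\psi_t$ to $\mathcal M_Y$, and hence realize it by a self-equivalence $\rho_t$ of $Y_{\mathbb Q}$. The key claim is that for a suitable integer $t$ (a large power of a fixed integer), $\rho_t$ is realized by an honest map $f\colon Y\to Y$. I would prove this by induction over a Postnikov tower (equivalently, over the cells of $Y$). The obstructions to lifting $\rho_t\circ(Y\to Y_{\mathbb Q})$ through $Y\to Y_{\mathbb Q}$ lie in cohomology of the finite complex $Y$ with coefficients in the fibres of the localization maps of the Postnikov stages. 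These fibres have homotopy groups that are torsion, or quotients $\mathbb Q/\mathbb Z$-type, relative to the finitely generated groups. Since $Y$ is finite, each obstruction has finite order, and replacing $t$ by a multiple kills it. This is where nilpotency is used: the Postnikov tower refines into principal fibrations, so the obstruction theory applies.

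The third step applies the same argument with $X$ in place of $Y$, which I expect to be the main obstacle. Precomposing with the grading map on $X$ is not allowed, because the statement permits modifying only on the $Y$ side. I would therefore show that $\rho_s\circ g_0\colon X\to Y_{\mathbb Q}$ lifts to a map $g\colon X\to Y$ for a suitable integer $s$. Again this goes by obstruction theory over the finite complex $X$. The obstruction classes are multiplied by powers of $s$ under $\rho_s$, because $\rho_s$ acts on rational homotopy groups of degree $k$ by a power of $s$ in the formal case. Hence, after clearing finitely many denominators, all obstructions vanish. Taking a common integer $N$ that works for both the realization of $\rho_N$ on $Y$ and the lifting of $g_0$, let $f\colon Y\to Y$ realize $\rho_N$. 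Then $f^*=\psi_N$, an automorphism since $N\ne 0$, and $g^*=\psi_N\circ\phi=f^*\circ\phi$. The delicate point is verifying that $\rho_N$ scales every obstruction group, not only cohomology; for this I would use that the formal minimal model admits a bigrading compatible with the weight action.
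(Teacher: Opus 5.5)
The paper gives no proof here; the result is quoted from \cite{GH81}. As printed it does not type-check: for $g\colon X\to Y$ one has $g^*\colon H^*(Y;\mathbb Q)\to H^*(X;\mathbb Q)$, while $f^*\circ\phi$ goes from $H^*(X;\mathbb Q)$ to $H^*(Y;\mathbb Q)$. The intended version is the one used in Case (2) of Lemma~\ref{no mono for Grassmannians}, with $X=\mathbb CG(m_1,m_2)$ and $Y=\mathbb CG(n_1,n_2)$. There $g\colon Y\to X$, i.e.\ the rational realization $Y\to X_{\mathbb Q}$ of $\phi$ is modified by \emph{precomposition} with a self-map of its source.

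You instead silently reversed $\phi$. A map $\mathcal M_Y\to\mathcal M_X$, or $\hat g\colon X_{\mathbb Q}\to Y_{\mathbb Q}$, induces a map $H^*(Y;\mathbb Q)\to H^*(X;\mathbb Q)$ and cannot induce $\phi$. Also, for your $g$ lifting $\rho_N\circ g_0$ you actually get $g^*=g_0^*\circ f^*$, so writing it as $f^*\circ\phi$ conflates the grading automorphisms of $H^*(X;\mathbb Q)$ and $H^*(Y;\mathbb Q)$. This is repairable, because grading automorphisms commute with graded algebra maps, provided $\psi_N$ is realized on both spaces. Under the correct reading, the ``main obstacle'' of your third step disappears, since precomposition on the source is exactly what is allowed.

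The genuine gap is the integral part, which is the whole content beyond rational homotopy theory. In Steps 2 and 3 you assert that the finite-order obstructions can be killed by enlarging $t$ or $s$, but you give no mechanism. The obstruction groups are $H^{k+1}(-;\pi_kF)$, with torsion coefficients built from $\operatorname{tors}\pi_k(Y)$ and $\pi_{k+1}(Y)\otimes\mathbb Q/\mathbb Z$. A self-map $\rho_s$ of $Y_{\mathbb Q}$ does not act on these groups at all unless it is covered by an integral self-map of the Postnikov tower of $Y$.

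\begin{itemize}
\item In Step 2, that integral self-map is precisely what you are trying to construct. So relating the obstructions for $\rho_{tm}\circ\iota$ and $\rho_t\circ\iota$ (with $\iota\colon Y\to Y_{\mathbb Q}$) is circular.
\item In Step 3, even given an integral $f$ covering $\rho_s$, its action on $\operatorname{tors}\pi_*(Y)$ is not determined by any rational data. On $\pi_{k+1}(Y)/\operatorname{tors}$ you only know its eigenvalues. These are several different powers $s^{k+1+i}$, one for each nonzero $V^{k+1}_i$ in the bigraded model, not a single scalar. An integral matrix with such eigenvalues need not be divisible by the order of a given obstruction.
\item Later obstructions depend on the lifts chosen at earlier stages, so naturality controls only the first nonvanishing one.
\end{itemize}

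The bigrading you invoke at the end is a rational structure and is blind to torsion coefficients, so it cannot close this gap. What is needed is genuinely integral input:
\begin{itemize}
\item a nilpotent formal finite complex admits self-maps realizing $\psi_t$ for suitable integers $t\neq 0$ (results of Sullivan--Shiga type);
\item a map from a finite complex into a rationalization lifts after precomposing with a suitable such self-map of the source, where obstruction classes at least transform naturally by pullback.
\end{itemize}
That is the substance of \cite{GH81}; your proposal assumes it rather than proving it.
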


The following lemma will be useful.

\begin{lemma}\label{no mono for Grassmannians}
Let $\mathbb F \in \{\mathbb C, \mathbb H\}$ and let $\phi \colon H^*(\mathbb FG(m_1,m_2);\mathbb Z) \to H^*(\mathbb FG(n_1,n_2);\mathbb Z)$ be a graded algebra homomorphism between the cohomology algebras of equal dimensional Grassmannians. Suppose that one of the following holds:
\begin{enumerate}
    \item $1 \le \min\{n_1,n_2\} < \min \{m_1,m_2\}$;
    \item $2 \le \min\{m_1,m_2\} < \min \{n_1, n_2\}$ and\\ $(n_1^2-1)(n_2^2-1)(m_1^2-1)(m_2^2-1)$ is not a perfect square.
\end{enumerate}
Then $\phi$ is not a monomorphism.
\end{lemma}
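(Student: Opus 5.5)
The plan is to work rationally throughout. Both cohomology rings are free $\mathbb Z$-modules, so an integral monomorphism $\phi$ stays injective after tensoring with $\mathbb Q$, since $\mathbb Q$ is flat over $\mathbb Z$. It therefore suffices to show that no graded monomorphism $H^*(\mathbb FG(m_1,m_2);\mathbb Q)\to H^*(\mathbb FG(n_1,n_2);\mathbb Q)$ exists. I treat $\mathbb F=\mathbb C$; the quaternionic case is the same after doubling all degrees. The two hypotheses are handled by quite different arguments: a Betti-number count for (1), and a realization argument for (2).

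\textbf{Case (1).} Let $s=\min\{n_1,n_2\}$ and $D=n_1n_2=m_1m_2$. I will compare dimensions of the two rings in degree $2(s+1)$. A graded monomorphism is injective degreewise, so it forces $\dim \mathcal S^{2k}\le \dim\mathcal R^{2k}$ for every $k$. By the Schubert cell decomposition, $\dim H^{2k}(\mathbb CG(a,b);\mathbb Q)$ equals the number of partitions of $k$ fitting in an $a\times b$ box.
\begin{itemize}
\item \emph{Domain.} Since $\min\{m_1,m_2\}\ge s+1$, both sides of the $m_1\times m_2$ box are at least $s+1$. Hence every partition of $s+1$ fits, and $\dim\mathcal S^{2(s+1)}=p(s+1)$.
\item \emph{Codomain.} The box is $s\times (D/s)$. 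From $s<\min\{m_1,m_2\}\le\sqrt D$ we get $D/s>\sqrt D> s$, so $D/s\ge s+1$. Thus every partition of $s+1$ fits except $(1^{s+1})$, which has $s+1$ parts but only $s$ are allowed. So $\dim\mathcal R^{2(s+1)}=p(s+1)-1$.
\item \emph{Degree is in range.} We need $s+1\le D$ so that the degree $2(s+1)$ is meaningful. This holds because $D\ge s(s+1)$.
\end{itemize}
Since $p(s+1)>p(s+1)-1$, the map $\phi$ cannot be injective in degree $2(s+1)$.

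\textbf{Case (2).} Suppose, for contradiction, that $\phi$ is a monomorphism. Complex Grassmannians are simply connected (hence nilpotent), formal (being compact Kähler), and finite CW complexes. So Theorem~\ref{glover-homer realization} applies to $\phi$. It yields a self-map $f$ of $\mathbb CG(n_1,n_2)$ with $f^*$ an automorphism, and a map $g\colon \mathbb CG(n_1,n_2)\to\mathbb CG(m_1,m_2)$ with $g^*=f^*\circ\phi$; I read the direction of $g$ so that $g^*$ goes from $H^*(\mathbb CG(m_1,m_2))$ to $H^*(\mathbb CG(n_1,n_2))$, which is the only consistent one. Then $g^*$ is a composite of a monomorphism and an automorphism, hence injective, and in particular nonzero on top-degree classes. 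By Lemma~\ref{deg zero iff not injective}, $\deg g\neq 0$. Hypotheses (1)--(3) of Theorem~\ref{sarkar-sankaran degree} hold with the roles as stated, so $\deg g=0$, a contradiction.

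The main point to check is the direction and hypotheses of the realization theorem as stated in the paper (which direction the map $g$ goes, and that $f^*$ is invertible), since everything else is a quotation. In case (1) the only subtle point is the inequality $D/s\ge s+1$, which ensures that exactly one partition is lost in the codomain.
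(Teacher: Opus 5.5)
Your proposal is correct and follows the paper's proof. In case (1) you compare ranks in degree $2(s+1)$, as the paper does; your Schubert-cell count of partitions in a box is the same as the paper's count of monomials in $a_1,\dots,a_{n_1}$ below the first relation. In case (2) you pass to $\mathbb Q$, apply Theorem~\ref{glover-homer realization}, and contradict Theorem~\ref{sarkar-sankaran degree}, with $g$ going in the same direction the paper actually uses; deducing $\deg g\neq 0$ directly from injectivity in top degree is a slightly cleaner substitute for the paper's K\"ahler-class argument.
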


\begin{proof}
Without loss of generality assume $n_1\le n_2$ and $m_1\le m_2$; recall also that $n_1n_2=m_1m_2$, since $\mathbb FG(n_1,n_2)$ and $\mathbb FG(m_1,m_2)$ have the same dimension.

We only prove for $\mathbb F=\mathbb C$ and the case for $\mathbb F=\mathbb H$ follows from a similar argument.

\medskip
\noindent\textbf{Case (1).} Here $n_1<m_1$. Combined with $n_1n_2=m_1m_2$ and $m_1\le m_2$, this forces
\[
n_2=\frac{m_1m_2}{n_1}>\frac{m_1m_2}{m_1}=m_2,
\]
so that $1\le n_1<m_1\le m_2<n_2$. The argument below follows the same strategy as \cite[Theorem 4.2, Case $r=2$]{Man26}; we include the details for completeness.

Recall from \eqref{eq:grassmann-reduced} the integral cohomology presentations
\[
H^*(\mathbb CG(n_1,n_2);\mathbb Z)\cong \mathbb Z[a_1,\dots,a_{n_1}]/I_{n_1,n_2}, \quad
H^*(\mathbb CG(m_1,m_2);\mathbb Z)\cong \mathbb Z[b_1,\dots,b_{m_1}]/I_{m_1,m_2},
\]
where $a_i$ (resp. $b_i$) is the $i$-th Chern class of the canonical rank-$n_1$ (resp. rank-$m_1$) bundle, $\deg a_i=\deg b_i=2i$, and the ideal of relations $I_{n_1,n_2}$ (resp. $I_{m_1,m_2}$) is generated in degrees $\ge 2(n_2+1)$ (resp. $\ge 2(m_2+1)$).

Since $n_1+1\le n_2$, we have $2(n_1+1)<2(n_2+1)$. Thus the rank of $H^{2(n_1+1)}(\mathbb CG(n_1, n_2);\mathbb Z)$ equals the number of monomials $a_1^{e_1}\cdots a_{n_1}^{e_{n_1}}$ with $\sum_i i\,e_i=n_1+1$, i.e. the number of partitions of $n_1+1$ into at most $n_1$ parts. As $n_1+1<m_2+1$ as well, the same reasoning applies to $H^{2(n_1+1)}(\mathbb CG(m_1,m_2);\mathbb Z)$, whose rank equals the number of monomials in $b_1,\dots,b_{m_1}$ of total degree $n_1+1$. Because $m_1\ge n_1+1$, every monomial that only involves $a_1,\dots,a_{n_1}$ has an obvious counterpart in $b_1,\dots,b_{n_1}$, and in addition the single generator $b_{n_1+1}$ itself contributes a monomial of degree $2(n_1+1)$ that has no analogue in terms of $a_i$'s. Hence
\[
\operatorname{rank} H^{2(n_1+1)}(\mathbb CG(n_1,n_2);\mathbb Z) \;<\; \operatorname{rank} H^{2(n_1+1)}(\mathbb CG(m_1,m_2);\mathbb Z).
\]
This prevents $\phi$ from being a graded monomorphism.

\medskip
\noindent\textbf{Case (2).} Since the integral cohomology of a complex Grassmannian is torsion-free, it suffices to show that there is no graded algebra monomorphism
\[
H^*(\mathbb CG(m_1, m_2);\mathbb Q)\longrightarrow H^*(\mathbb CG(n_1, n_2);\mathbb Q).
\]
Suppose, for contradiction, that such a monomorphism $\phi$ exists. By Theorem \ref{glover-homer realization}, there is an automorphism $f^*$ of $H^*(\mathbb CG(n_1,n_2);\mathbb Q)$, induced by a self-map $f$ of $\mathbb CG(n_1,n_2)$, such that $f^*\circ\phi$ is induced by a continuous map
\[
g:\mathbb CG(n_1,n_2)\to \mathbb CG(m_1,m_2).
\]
Since $f^*$ is an automorphism, $g^*=f^*\circ\phi$ is again a monomorphism of graded algebras.

Since any nonzero class in $H^2(\mathbb{C}G(n_1,n_2); \mathbb{Q})$ (resp., $H^2(\mathbb{C}G(m_1,m_2); \mathbb{Q})$) is a K\"ahler class, its top power yields a nonzero multiple of the fundamental cohomology class. As $g^*$ is injective, it maps a degree-$2$ class to a nonzero K\"ahler class, implying $g^*$ is nonzero on top cohomology. Consequently, the Brouwer degree $\deg(g) \neq 0$, which contradicts Theorem \ref{sarkar-sankaran degree}. Thus, no such algebra monomorphism $\phi$ exists.

This completes the proof.
\end{proof}

\begin{remark}\label{no mono in products}
    Continuing the discussion in the proof of Case (1) of Lemma~\ref{no mono for Grassmannians}, observe that for any $i < d(n_1+1)$, where $d = \dim_{\mathbb R}\mathbb F$,
    \begin{equation}\label{rank equility in low degree}
        \mathrm{rank}\, H^i(\mathbb FG(n_1, n_2);\mathbb Z)\; = \;\mathrm{rank}\, H^i(\mathbb FG(m_1, m_2);\mathbb Z).
    \end{equation}
     We have a strict inequality
    \begin{equation}\label{rank strict inequality}
        \operatorname{rank} H^{d(n_1+1)}(\mathbb FG(n_1,n_2);\mathbb Z) \;<\; \operatorname{rank} H^{d(n_1+1)}(\mathbb FG(m_1,m_2);\mathbb Z).
    \end{equation}
    By Poincar\'e duality, any discrepancy in the ranks of the cohomology groups can only occur in the range $[d(n_1+1), D - d(n_1+1)]$, where $D$ denotes the common real dimension of the Grassmannians $\mathbb FG(n_1, n_2)$ and $\mathbb FG(m_1, m_2)$. 
    Furthermore, this strict rank inequality carries over when taking product spaces with a sphere $\mathbb{S}^m$. By the Künneth formula, the rank of $j$-th cohomology of $X \times \mathbb{S}^m$ is given by the sum $(\mathrm{rank}\, H^j(X;\mathbb{Z}) + \mathrm{rank}\, H^{j-m}(X;\mathbb{Z}))$. Hence, by \eqref{rank equility in low degree} and \eqref{rank strict inequality}, we obtain 
    \[
    \operatorname{rank} H^{d(n_1+1)}\big(\mathbb S^m \times \mathbb FG(n_1,n_2);\mathbb Z\big) \;<\; \operatorname{rank} H^{d(n_1+1)}\big(\mathbb S^m \times \mathbb FG(m_1,m_2);\mathbb Z\big).
    \]
    Thus, it follows that there is no monomorphism $\phi$ from $H^*(\mathbb S^m \times \mathbb FG(m_1, m_2);\mathbb Z)$ to $H^*(\mathbb S^m \times \mathbb FG(n_1, n_2);\mathbb Z),$ if $\min{\{n_1,n_2\}}< \min \{m_1, m_2\}.$
    
\end{remark}

\begin{proposition}\label{deg zero, min ni is less}
Let $\nu=(n_1,n_2)$ and $\mu=(m_1,m_2)$ be tuples of natural numbers satisfying $n_1n_2=m_1m_2$ and let $\mathbb F\in\{\mathbb C,\mathbb H\}$. Suppose that $\min\{n_1,n_2\}<\min\{m_1,m_2\}.$
Then every continuous map $f:\mathbb S^m\times\mathbb FG(\nu)
\longrightarrow
\mathbb S^m\times\mathbb FG(\mu)$
has degree zero.  Consequently, when $P(m,\nu)$ and $P(m,\mu)$ are orientable, for any continuous map $g:P(m,\nu)\longrightarrow P(m,\mu)$, its degree $\deg(g)$ is zero.
\end{proposition}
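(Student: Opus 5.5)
The plan follows the template of Proposition~\ref{degree zero, m r are distinct} and Theorem~\ref{deg zero, one not grassmannian}: reduce the statement about $g$ to the statement about maps of products, and then obstruct nonzero degree cohomologically via Lemma~\ref{deg zero iff not injective}.

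\textbf{Reduction to products.} Given $g\colon P(m,\nu)\to P(m,\mu)$ between orientable generalized Dold manifolds, Remark~\ref{covering lift} provides a lift $\widetilde g\colon \mathbb S^m\times\mathbb CG(\nu)\to\mathbb S^m\times\mathbb CG(\mu)$ with $p_{m,\mu}\circ\widetilde g=g\circ p_{m,\nu}$. Taking degrees, and using that both covering projections have degree $2$ (for suitable orientations), gives $2\deg(\widetilde g)=2\deg(g)$. So it suffices to prove the first assertion: every continuous $f\colon\mathbb S^m\times\mathbb FG(\nu)\to\mathbb S^m\times\mathbb FG(\mu)$ has degree zero.

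\textbf{Main step.} By Lemma~\ref{deg zero iff not injective}, it is enough to show that
\[
f^*\colon H^*(\mathbb S^m\times\mathbb FG(\mu);\mathbb Q)\to H^*(\mathbb S^m\times\mathbb FG(\nu);\mathbb Q)
\]
is not injective. This is exactly the conclusion of Remark~\ref{no mono in products}. With $n_1\le n_2$ and $m_1\le m_2$, the hypotheses $\min\{n_i\}<\min\{m_i\}$ and $n_1n_2=m_1m_2$ give $n_1<m_1\le m_2<n_2$, as in Case (1) of Lemma~\ref{no mono for Grassmannians}. Set $d=\dim_{\mathbb R}\mathbb F$. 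The Grassmannians then have equal Betti numbers below degree $d(n_1+1)$ and a strict inequality in degree $d(n_1+1)$, namely
\[
\operatorname{rank}H^{d(n_1+1)}(\mathbb FG(n_1,n_2))<\operatorname{rank}H^{d(n_1+1)}(\mathbb FG(m_1,m_2)).
\]
Künneth then gives
\[
\operatorname{rank}H^{j}(\mathbb S^m\times X)=\operatorname{rank}H^j(X)+\operatorname{rank}H^{j-m}(X),
\]
and since $j-m<j$, the second summands agree in degree $j=d(n_1+1)$ while the first summands satisfy the strict inequality. Because all groups involved are torsion-free, the rational Betti numbers satisfy the same inequality. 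A linear map from a space of larger dimension to one of smaller dimension in degree $d(n_1+1)$ cannot be injective, so $f^*$ is not injective and $\deg f=0$.

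\textbf{Main obstacle.} The main point to verify is the rank comparison itself. One has to check that in degrees below $d(n_2+1)$ the relations in the reduced presentation \eqref{eq:grassmann-reduced} impose nothing, so that the ranks count monomials in the Chern classes. One also has to check that $b_{n_1+1}$ exists, which holds because $m_1\ge n_1+1$, and that it gives a genuinely extra monomial. Both facts are recorded in Lemma~\ref{no mono for Grassmannians} and Remark~\ref{no mono in products}, so I would cite them. The quaternionic case is the same argument with degrees doubled.
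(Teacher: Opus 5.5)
Your proposal is correct and follows the paper's own argument. You reduce to the double covers via Remark~\ref{covering lift} to get $\deg g=\deg\widetilde g$, then use the Betti-number comparison in degree $d(n_1+1)$ from Remark~\ref{no mono in products} (equal ranks below that degree controlling the $H^{j-m}$ Künneth summand) to see that $f^*$ cannot be injective, whence $\deg f=0$ by Lemma~\ref{deg zero iff not injective}.
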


\begin{proof}
The proof follows similarly as the proof of Proposition \ref{degree zero, m r are distinct}, using Remark \ref{covering lift} and Lemma \ref{no mono in products}.
\end{proof}

\section{Cohomological rigidity of the products $\mathbb S^m \times \mathbb CG(\nu)$}\label{sec cohomological rigidity}

In this section, we apply the observations of Section~\ref{nonexistance on nonzero deg} to prove the cohomological rigidity of products of the form $\mathbb S^m\times\mathbb CG(\nu)$.

Our study of the existence of nonzero-degree maps between products of a complex or quaternionic partial flag manifold and a sphere required us to rule out graded algebra monomorphisms between the corresponding rational cohomology algebras. Since a graded isomorphism induces monomorphisms in both directions, for any two distinct products $\mathbb S^m \times \mathbb FG(\nu)$ and $\mathbb S^r \times \mathbb FG(\mu)$ of the same dimension, if a monomorphism in one direction does not exist, then their rational cohomology algebras cannot be isomorphic. In view of this, as a consequence of Lemma~\ref{no mono m r distinct}, Lemma~\ref{no monomorphism}, and Remark~\ref{no mono in products}, we obtain the following cohomological rigidity result.

\begin{theorem}\label{thm cohomological rigidity}
    Let $\mathbb F\in \{\mathbb C, \mathbb H\}.$ Let there be a graded algebra isomorphism between the cohomology algebras of the products $H^*(\mathbb S^m \times \mathbb FG(\nu);\mathbb Q)$ and $H^*(\mathbb S^r\times \mathbb FG(\mu);\mathbb Q)$. Then $m=r$ and $\nu$ is a permutation of $\mu.$ Consequently, the products are homeomorphic.
\end{theorem}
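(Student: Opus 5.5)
Suppose $\phi\colon H^*(\mathbb S^m\times\mathbb FG(\nu);\mathbb Q)\to H^*(\mathbb S^r\times\mathbb FG(\mu);\mathbb Q)$ is a graded algebra isomorphism. The plan is to show that in every configuration other than ``$m=r$ and $\nu$ a permutation of $\mu$'', one of $\phi$ or $\phi^{-1}$ contradicts an earlier result. An isomorphism preserves the top degree, so the two products have equal real dimension. Both $\phi$ and $\phi^{-1}$ are graded monomorphisms, so any nonexistence result for monomorphisms in either direction applies.

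\textbf{Step 1: $m=r$.} Suppose $m\neq r$. Lemma~\ref{no mono m r distinct} applies to $\phi$ unless $(m,\mu)=(2,(1,1))$. It applies to $\phi^{-1}$, with the roles of $(m,\nu)$ and $(r,\mu)$ swapped, unless $(r,\nu)=(2,(1,1))$. Both exceptions cannot hold at once, because that would force $m=r=2$. Hence one direction is ruled out, giving a contradiction, and so $m=r$.

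I expect this step to be the main point needing care. The exceptional case of Lemma~\ref{no mono m r distinct} cannot be handled by a single direction, and the symmetry of an isomorphism is exactly what removes it. I would also double-check that the lemma's proof uses only $m+\dim_{\mathbb R}\mathbb FG(\nu)=r+\dim_{\mathbb R}\mathbb FG(\mu)$, and not the orientability parity used in its Case split. If the parity argument were essential to obtain $|N_1-N_2|\ge2$, I would instead treat $|N_1-N_2|=1$ directly by the same top-power argument with a Kähler class.

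\textbf{Step 2: fixing the fibre.} Now $m=r$, so $\dim\mathbb FG(\nu)=\dim\mathbb FG(\mu)$. Suppose, for contradiction, that $\nu$ is not a permutation of $\mu$.
\begin{itemize}
\item If $(|\nu|,|\mu|)\neq(2,2)$, Lemma~\ref{no monomorphism} says $\phi^{-1}$ is not a monomorphism, which is a contradiction.
\item If $|\nu|=|\mu|=2$, the Grassmannians have equal dimension, so $n_1n_2=m_1m_2$. Since $\nu$ is not a permutation of $\mu$, we get $\min\{n_1,n_2\}\neq\min\{m_1,m_2\}$; relabel so that $\min\{n_1,n_2\}<\min\{m_1,m_2\}$. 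Remark~\ref{no mono in products} then shows that no monomorphism $H^*(\mathbb S^m\times\mathbb FG(m_1,m_2))\to H^*(\mathbb S^m\times\mathbb FG(n_1,n_2))$ exists, whereas $\phi$ or $\phi^{-1}$ would be one.
\end{itemize}
The rank argument in Remark~\ref{no mono in products} works equally over $\mathbb Q$, since these cohomology groups are torsion-free; in fact the Betti numbers already differ, so no cohomology isomorphism can exist at all.

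\textbf{Conclusion.} Thus $m=r$ and $\nu$ is a permutation of $\mu$. Permuting the subspaces $(V_1,\dots,V_\ell)$ gives a homeomorphism (indeed a diffeomorphism) $\mathbb FG(\nu)\cong\mathbb FG(\mu)$. Taking the product with $\mathrm{id}_{\mathbb S^m}$ gives the homeomorphism of the products.
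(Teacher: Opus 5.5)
Your route is the paper's: an isomorphism gives monomorphisms in both directions, and these are excluded by Lemma~\ref{no mono m r distinct}, Lemma~\ref{no monomorphism} and Remark~\ref{no mono in products}. Your Grassmannian bullet is fine. The paper never spells out the exceptional case, and your treatment of it is where the argument fails. The first two lemmas, as stated, are false in some configurations you feed them, and Step~1 sends the exceptional case into exactly such a configuration.

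\textbf{The exceptional case in Step~1.} The statement of Lemma~\ref{no mono m r distinct} has $\phi$ pointing opposite to its proof: the proof sends a K\"ahler class of $\mathbb CG(\mu)$ into $H^*(\mathbb S^m\times\mathbb CG(\nu);\mathbb Q)$. So the genuine exception is ``$\mathbb S^2$ on the target side, $\mathbb CP^1$ on the source side''. For $\mathbb F=\mathbb C$ and $(m,\mu)=(2,(1,1))$ you apply the lemma to $\phi^{-1}\colon H^*(\mathbb S^r\times\mathbb CP^1;\mathbb Q)\to H^*(\mathbb S^2\times\mathbb CG(\nu);\mathbb Q)$, with $r=D(\nu)$. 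But the degree-one map $\mathbb S^2\times\mathbb CG(\nu)\to\mathbb S^{D(\nu)}\times\mathbb S^2$ of Remark~\ref{rem:hypothesis_ii_discussion} induces a monomorphism in precisely this direction.

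\textbf{Parity does not help.} Your parity check would not rescue that direction in general. Case~2 of the lemma's proof deduces $b=0$ from $b^{N_1}z^{N_1}=0$, which tacitly assumes $z$ is K\"ahler. For example, $c_1(\eta_1)\mapsto x_1$, $u_8\mapsto u_4x_2^2$ defines a monomorphism $H^*(\mathbb S^8\times\mathbb CP^3;\mathbb Q)\to H^*(\mathbb S^4\times\mathbb CG(1,1,2);\mathbb Q)$. Here the flag part is pullback along $(V_1,V_2,V_3)\mapsto V_1$, and $x_1^4=0$ while $x_1^3x_2^2\neq0$. This happens even though $8+3$ and $4+5$ are both odd.

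\textbf{The same issue in Step~2.} In your first bullet, the proof of Lemma~\ref{no monomorphism} needs $|\mu|\ge3$ in order to invoke Lemma~\ref{T(e_i)}. For $|\mu|=2<|\nu|$ its conclusion fails: sending the generator of $H^2(\mathbb CP^3)$ to a K\"ahler class and $u\mapsto u$ gives a monomorphism $H^*(\mathbb S^m\times\mathbb CP^3;\mathbb Q)\to H^*(\mathbb S^m\times\mathbb CG(1,1,1);\mathbb Q)$. That is exactly the direction of your $\phi^{-1}$.

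\textbf{The fix.} Since you have an isomorphism, you only need to use the safe direction. Let $d=\dim_{\mathbb R}\mathbb F$ and let $N_\nu,N_\mu$ be the $\mathbb F$-dimensions of the flag manifolds. If $m\neq r$, then $N_\nu\neq N_\mu$; say $N_\nu>N_\mu$. Then $r=m+d(N_\nu-N_\mu)\ge d+1$, so $H^d(\mathbb S^r\times\mathbb FG(\mu);\mathbb Q)=H^d(\mathbb FG(\mu);\mathbb Q)$. Take $y\in H^d(\mathbb FG(\nu);\mathbb Q)$ of maximal height, so $y^{N_\nu}\neq0$. Then $\phi(y)^{N_\nu}$ lies in degree $dN_\nu>\dim_{\mathbb R}\mathbb FG(\mu)$ and vanishes, contradicting injectivity of $\phi$. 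This is Case~1 of the lemma's proof, and it needs neither the parity condition nor any exceptional case, so your symmetry trick and parity fallback become unnecessary. In Step~2, first note that $|\nu|=|\mu|$, because $\dim H^d$ must agree on both sides when $m=r$. Only then apply Lemma~\ref{no monomorphism}; its argument is sound when $|\nu|=|\mu|\ge3$.
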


The generalized Dold manifolds under consideration in the article have fundamental group isomorphic to $\mathbb Z_2$ or $\mathbb Z$ and are therefore not simply connected. Thus, an isomorphism $f^*$ of their rational cohomology algebras, induced from a continuous map $f$, does not allow us to invoke Whitehead's theorem directly to conclude that $f$ is a homotopy equivalence. But, under certain additional hypothesis, we can conclude stronger, that is, the GDMs are homeomorphic.
In this context, we prove the following proposition.
\begin{proposition}\label{cohom rigidity of GDM}
Let $\nu=(n_1, \ldots, n_\ell)$ and $\mu=(m_1, \ldots, m_\wp)$. Consider a continuous map
$f\colon P(m,\nu)\longrightarrow P(r,\mu)$ between two oriented, same dimensional GDMs, such that
\[
f^*\colon H^*(P(r,\mu);\mathbb Q)
\longrightarrow H^*(P(m,\nu);\mathbb Q)
\]
is an isomorphism and the following hold:
\begin{enumerate}
    \item  $(m,\mu)\neq (2, (1,1)),$
    \item  $\min \{n_i\}_i< \min \{m_i\}_i$, when $(\ell, \wp)=(2,2).$
\end{enumerate}
Then $m=r$ and $\nu$ is a permutation of
$\mu$. In particular, the GDMs $P(m,\nu)$ and $P(r,\mu)$ are homeomorphic.
\end{proposition}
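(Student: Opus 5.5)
The plan is to turn the hypothesis that $f^*$ is an isomorphism into the statement $\deg(f)\neq 0$, and then let the degree-zero results of Section~\ref{nonexistance on nonzero deg} rule out every configuration except $m=r$ with $\nu$ a permutation of $\mu$.

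\emph{Step 1: nonzero degree.} Both GDMs are closed, connected, oriented and of the same dimension $N$, so $H^N(P(r,\mu);\mathbb Q)\cong\mathbb Q\cong H^N(P(m,\nu);\mathbb Q)$. Since $f^*$ is an isomorphism in every degree, it is injective on cohomology. Lemma~\ref{deg zero iff not injective} then gives $\deg(f)\neq0$; equivalently, $f^*(\mu_{P(r,\mu)})=\deg(f)\,\mu_{P(m,\nu)}$ with $\deg(f)\neq0$.

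\emph{Step 2: $m=r$.} Suppose $m\neq r$. By hypothesis (1) we have $(m,\mu)\neq(2,(1,1))$, so Proposition~\ref{degree zero, m r are distinct} applies to the orientable manifolds $P(m,\nu)$ and $P(r,\mu)$ and gives $\deg(f)=0$. This contradicts Step 1, so $m=r$. The dimensions are equal, hence $\dim_{\mathbb C}\mathbb CG(\nu)=\dim_{\mathbb C}\mathbb CG(\mu)$.

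\emph{Step 3: $\nu$ is a permutation of $\mu$.} Suppose not, and split according to the lengths.
\begin{itemize}
\item If $(\ell,\wp)\neq(2,2)$, then one of $\nu,\mu$ has length at least $3$. (If both had length at most $2$ but were not both $2$, one would be a singleton: a point fibre in one case and a positive-dimensional Grassmannian in the other, or $\mathbb CP^0$. This is excluded by the dimension count, except in trivial cases which are handled directly.) Theorem~\ref{deg zero, one not grassmannian} then gives $\deg(f)=0$, contradicting Step 1.
\item If $(\ell,\wp)=(2,2)$, hypothesis (2) gives $\min\{n_i\}<\min\{m_i\}$. Proposition~\ref{deg zero, min ni is less} then gives $\deg(f)=0$, again a contradiction.
\end{itemize}
In the $(2,2)$ case, hypothesis (2) is incompatible with Step 1. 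So this case cannot actually occur with $\nu$ not a permutation of $\mu$, and the conclusion holds there. For completeness, if instead $\min\{n_i\}=\min\{m_i\}$, then $n_1n_2=m_1m_2$ already forces $\nu$ to be a permutation of $\mu$.

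\emph{Step 4: homeomorphism.} Let $\tau$ be a permutation with $m_i=n_{\tau(i)}$. The map
\[
\mathbb CG(\nu)\to\mathbb CG(\mu),\qquad (V_1,\ldots,V_\ell)\mapsto (V_{\tau(1)},\ldots,V_{\tau(\ell)})
\]
is a diffeomorphism and commutes with complex conjugation. Taking its product with the identity of $\mathbb S^m$ gives a $\theta$-equivariant diffeomorphism, which descends to a homeomorphism (indeed a diffeomorphism) $P(m,\nu)\cong P(m,\mu)$.

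The only delicate point I expect is the bookkeeping in Step 3, namely confirming that the case split covers all length pairs. The main content is already in the cited degree-zero results.
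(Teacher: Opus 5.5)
Your proposal is correct and follows the paper's proof step for step. Both arguments run as follows: nonzero degree comes from the isomorphism in top degree; Proposition~\ref{degree zero, m r are distinct} forces $m=r$; Proposition~\ref{deg zero, min ni is less} rules out the $(2,2)$ case; and Theorem~\ref{deg zero, one not grassmannian} handles $(\ell,\wp)\neq(2,2)$. Your explicit equivariant permutation map simply spells out the homeomorphism that the paper only asserts.

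One caveat on your length bookkeeping, a point the paper itself passes over silently. When $\ell=\wp=1$ both fibres are points, and $P(m,(n))=P(m,(n'))=\mathbb RP^m$ for any $n,n'$. So this case cannot be ``handled directly'' to show that $\nu$ is a permutation of $\mu$; it has to be excluded as degenerate.
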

\begin{proof}
    The cohomology isomorphism implies that $f^*$ is an isomorphism in top degree. Hence, by the definition of the Brouwer degree, $\deg(f)\neq 0.$ Applying Proposition~\ref{degree zero, m r are distinct} along with the assumption $(m,\mu)\neq (2, (1,1))$, implies that $m$ and $r$ cannot be distinct. Thus we have $m=r.$ 
    
    Now observe the two cases:  $(\ell, \wp)=(2,2)$ and $(\ell, \wp)\neq (2,2).$

    When $(\ell, \wp)=(2,2)$, using the assumption $\min \{n_i\}_i< \min \{m_i\}_i$, Proposition~\ref{deg zero, min ni is less} implies that $\deg(f)=0,$ which is a contradiction. Thus, this case  does not arise. 

    When $(\ell, \wp)\neq (2,2),$ Theorem~\ref{deg zero, one not grassmannian} implies that the only way $\deg(f)\neq 0$ is when $\nu$ is a permutation of $\mu.$

    Therefore, we have $m=r$ and $\nu=\mu$ up to a permutation. Consequently, $P(m,\nu)$ and $P(r, \mu)$ are homeomorphic. 
\end{proof}

\begin{remark}
Note that Proposition~\ref{cohom rigidity of GDM} covers all cases with $m\neq r$, except for one exceptional case $(m,\mu)=(2,(1,1))$. In the
case $m=r$, it covers all cases in which at least one of the fibers of the domain and codomain is not a Grassmannian, that is, $(\ell, \wp)\neq (2,2)$. Of course, all non-orientable GDMs are excluded in the proposition.
\end{remark}




\begin{thebibliography}{BHH83}

\bibitem[BHH83]{BHH83}
S.~A. Broughton, M.~Hoffman, and W.~Homer.
\newblock The height of two-dimensional cohomology classes of complex flag
  manifolds.
\newblock {\em Canad. Math. Bull.}, 26(4):498--502, 1983.

\bibitem[Bor53]{Bor53}
A.~Borel.
\newblock Sur la cohomologie des espaces fibr\'es principaux et des espaces
  homog\`enes de groupes de {L}ie compacts.
\newblock {\em Ann. of Math. (2)}, 57:115--207, 1953.

\bibitem[Bro11]{Bro11}
L.~E.~J. Brouwer.
\newblock \"uber {A}bbildung von {M}annigfaltigkeiten.
\newblock {\em Math. Ann.}, 71(1):97--115, 1911.

\bibitem[CMS10]{CMS10}
S.~Choi, M.~Masuda, and D.~Y. Suh.
\newblock Topological classification of generalized {B}ott towers.
\newblock {\em Trans. Amer. Math. Soc.}, 362(2):1097--1112, 2010.

\bibitem[Dol56]{Do56}
A.~Dold.
\newblock Erzeugende der {T}homschen {A}lgebra {\protect$\mathfrak{n}$}.
\newblock {\em Math. Z.}, 65:25--35, 1956.

\bibitem[Ful98]{Ful98}
W.~Fulton.
\newblock {\em Intersection theory}, volume~2 of {\em Ergebnisse der Mathematik
  und ihrer Grenzgebiete. 3. Folge}.
\newblock Springer-Verlag, Berlin, 2nd edition, 1998.

\bibitem[GH78]{GH78}
H.~Glover and B.~Homer.
\newblock Endomorphisms of the cohomology ring of finite {G}rassmann manifolds.
\newblock In {\em Geometric applications of homotopy theory ({P}roc. {C}onf.,
  {E}vanston, {I}ll., 1977), {I}}, volume 657 of {\em Lecture Notes in Math.},
  pages 170--193. Springer, Berlin-New York, 1978.

\bibitem[GH81]{GH81}
H.~H. Glover and W.~D. Homer.
\newblock Self-maps of flag manifolds.
\newblock {\em Trans. Amer. Math. Soc.}, 267(2):423--434, 1981.

\bibitem[Gro82]{Gro82}
M.~Gromov.
\newblock Volume and bounded cohomology.
\newblock {\em Inst. Hautes \'Etudes Sci. Publ. Math.}, (56):5--99, 1982.

\bibitem[Gro99]{Gro99}
M.~Gromov.
\newblock {\em Metric Structures for Riemannian and Non-Riemannian Spaces},
  volume 152 of {\em Progress in Mathematics}.
\newblock Birkh\"auser, Boston, 1999.

\bibitem[HH84]{HH84}
M.~Hoffman and W.~Homer.
\newblock On cohomology automorphisms of complex flag manifolds.
\newblock {\em Proc. Amer. Math. Soc.}, 91(4):643--648, 1984.

\bibitem[HK22]{HK22}
A.~Higashitani and K.~Kurimoto.
\newblock Cohomological rigidity for {F}ano {B}ott manifolds.
\newblock {\em Math. Z.}, 301(3):2369--2391, 2022.

\bibitem[Man24]{Man24}
M.~Mandal.
\newblock {\em Cohomology of generalized Dold manifolds}.
\newblock Ph.d. thesis, Homi Bhabha National Institute, The Institute of
  Mathematical Sciences, 2024.

\bibitem[Man26]{Man26}
M.~Mandal.
\newblock Degrees of maps and cohomological rigidity of partial flag manifolds,
  2026.
\newblock arXiv:2608.09183.

\bibitem[MS74]{MS74}
J.~W. Milnor and J.~D. Stasheff.
\newblock {\em Characteristic classes}, volume No. 76 of {\em Annals of
  Mathematics Studies}.
\newblock Princeton University Press, Princeton, NJ; University of Tokyo Press,
  Tokyo, 1974.

\bibitem[MS22]{MS22}
M.~Mandal and P.~Sankaran.
\newblock Cohomology of generalized {D}old spaces.
\newblock {\em Topology Appl.}, 310:Paper No. 108040, 16, 2022.

\bibitem[MS24]{MS24}
M.~Mandal and P.~Sankaran.
\newblock Cohomology and {K}-theory of generalized dold manifolds fibred by
  complex flag manifolds, 2024.
\newblock To appear in \emph{Osaka J. Math.}; arXiv:2407.03932.

\bibitem[MS26]{MS26}
M.~Mandal and D.~Setia.
\newblock Rigidity of cohomology automorphisms of homogeneous spaces and
  coincidence theory, 2026.
\newblock To appear in \emph{Proc. Roy. Soc. Edinburgh Sect. A};
  arXiv:2512.21305.

\bibitem[NS19]{NS19}
A.~Nath and P.~Sankaran.
\newblock On generalized {D}old manifolds.
\newblock {\em Osaka J. Math.}, 56(1):75--90, 2019.

\bibitem[NS21]{NS21}
A.~Nath and P.~Sankaran.
\newblock A note on the equivariant cobordism of generalized {D}old manifolds.
\newblock {\em Topology Appl.}, 293:Paper No. 107554, 8, 2021.

\bibitem[PS89]{PS89}
K.~H. Paranjape and V.~Srinivas.
\newblock Self-maps of homogeneous spaces.
\newblock {\em Invent. Math.}, 98(2):425--444, 1989.

\bibitem[PS16]{PS16}
J.~Popko and A.~Szczepa\'nski.
\newblock Cohomological rigidity of oriented {H}antzsche-{W}endt manifolds.
\newblock {\em Adv. Math.}, 302:1044--1068, 2016.

\bibitem[RS97]{RS97}
V.~Ramani and P.~Sankaran.
\newblock On degrees of maps between {G}rassmannians.
\newblock {\em Proc. Indian Acad. Sci. Math. Sci.}, 107(1):13--19, 1997.

\bibitem[Spa66]{Spa66}
E.~H. Spanier.
\newblock {\em Algebraic topology}.
\newblock McGraw-Hill Book Co., New York-Toronto-London, 1966.

\bibitem[SS09]{SS09}
P.~Sankaran and S.~Sarkar.
\newblock Degrees of maps between {G}rassmann manifolds.
\newblock {\em Osaka J. Math.}, 46(4):1143--1161, 2009.

\end{thebibliography}

\end{document}